\documentclass[12pt]{article}
\usepackage{latexsym}
\usepackage{amssymb}
\usepackage{amsmath}
\usepackage{rotating}
\usepackage{multirow}
\usepackage{mathrsfs}
\usepackage{wasysym}
\usepackage{esint}
\usepackage{rawfonts}
\input{prepictex}
\input{pictex}
\input{postpictex}
\usepackage[OT2,OT1]{fontenc}
\def\cyr{%
\renewcommand\rmdefault{wncyr}%
\renewcommand\sfdefault{wncyss}%
\renewcommand\encodingdefault{OT2}%
\normalfont
\selectfont}
\DeclareMathAlphabet{\zap}{OT1}{pzc}{m}{it}
\DeclareTextFontCommand{\textcyr}{\cyr}
\def\be{\begin{equation}}
\def\ee{\end{equation}}
\def\bea{\begin{eqnarray*}}
\def\eea{\end{eqnarray*}}

\def\Bbb{\mathbb}

\def\CC{\mathbb C}

\newtheorem{main}{Theorem}

\DeclareMathOperator{\Hess}{Hess}

\newtheorem{thm}{Theorem}
\newtheorem{lem}{Lemma}[section]
\newtheorem{prop}{Proposition}

\newenvironment{proof}{\medskip \noindent
{\bf Proof.}}{\hfill \rule{.5em}{1em}
\\}

\def\ZZ{{\mathbb Z}}
\def\RR{{\mathbb R}}
\def\CP{{\mathbb C \mathbb P}}
\begin{document}

\title{Einstein Manifolds and\\ Extremal K\"ahler Metrics}

\author{Claude LeBrun\thanks{Supported 
in part by  NSF grant DMS-0905159.}\\SUNY Stony
 Brook}

\date{}
\maketitle

 \begin{abstract}	
 In joint work with Chen and Weber \cite{chenlebweb} ,  the author has elsewhere 
 shown that $\CP_2 \# 2\overline{\CP}_2$ admits an Einstein metric. 
  The present  paper gives   a new and rather different proof of this fact. 
Our results include  new existence theorems for
 extremal K\"ahler metrics, and these allow one  to prove
 the above existence statement by deforming 
 the K\"ahler-Einstein metric on $\CP_2 \# 3\overline{\CP}_2$
 until  bubbling-off occurs. 
 \end{abstract}

\section{Introduction}

Recall that a Riemannian metric is said to be {\em Einstein}
iff it has constant Ricci curvature  \cite{bes}. When this happens, the 
constant value assumed by the Ricci curvature 
is called the Einstein constant.  
A fundamental  problem of global  
Riemannian geometry is to determine  precisely  which smooth 
compact manifolds admit Einstein  metrics.

While we still remain quite far from being able to determine precisely which 
smooth compact $4$-manifolds admit Einstein metrics, notable
 progress
has  recently been achieved regarding  
  narrower versions of the problem. For example, it was shown
in \cite{chenlebweb} that if $M^4$ is  the underlying smooth oriented manifold of 
a compact complex surface, then $M$ admits an Einstein metric 
{with positive Einstein constant} iff  it  is diffeomorphic to a Del Pezzo surface. In other words, 
the only allowed diffeotypes are $S^2\times S^2$ and $\CP_2 \# k \overline{\CP}_2$, 
with   $0\leq k \leq 8$;  and, conversely, each  of these candidates does 
 actually admit   an Einstein metric with positive Einstein constant.
 
 The existence direction  of the above assertion    is proved by means  of 
 K\"ahler geometry. For example, the  theory of the complex Monge-Amp\`ere equation can
 be used to show  that all but two of the above  candidates admit 
 K\"ahler-Einstein metrics  \cite{s,tian,ty}.  
However,  $\CP_2 \#  \overline{\CP}_2$ and $\CP_2 \# 2 \overline{\CP}_2$
 cannot admit K\"ahler-Einstein metrics, owing to 
 the non-reductive nature of their automorphism groups \cite{mats}. 
  Nonetheless, Page \cite{page} was able to construct an explicit 
  cohomogeneity-one 
  Einstein metric on $\CP_2 \# \overline{\CP}_2$, and Derdzi{\'n}ski
  \cite{derd} subsequently discovered that this metric is actually  {\em conformally}
  K\"ahler. 
Following this lead, Chen, Weber, and the present author  later proved 
  \cite{chenlebweb} 
that $\CP_2 \# 2\overline{\CP}_2$  also admits 
a conformally K\"ahler, Einstein metric, although, in contrast to  the 
the Page metric, we do not know its  explicit form. 

The present article will  provide a  new proof for the existence of an Einstein metric
on  $\CP_2 \# 2\overline{\CP}_2$. In the process, we 
will actually give a direct proof of the following, ostensibly  stronger statement: 

\begin{main} \label{pace} 
There is a  conformally  
K\"ahler, Einstein metric
$h$ on $M=\CP_2\# 2 \overline{\CP}_2$
for which the  conformally related K\"ahler metric $g$ 
minimizes the $L^2$-norm of the scalar curvature among
all K\"ahler metrics on $M$. In other words, $h$ is an absolute minimizer of the 
Weyl functional among all conformally K\"ahler metrics on $M$.
\end{main}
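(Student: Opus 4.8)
The plan is to convert the conformally invariant Weyl functional into a K\"ahler-geometric quantity and thereby recast the whole problem as a global minimization of the Calabi energy. In real dimension four the Weyl functional $\int_M|W_h|^2\,d\mu_h$ is a conformal invariant, and the signature formula lets one write it as $2\int_M|W^+_h|^2\,d\mu_h-12\pi^2\tau(M)$. Since any K\"ahler metric $g$ on a complex surface obeys the pointwise identity $|W^+_g|^2=s_g^2/24$, every metric $h$ conformal to such a $g$ satisfies
\[
\int_M|W_h|^2\,d\mu_h=\frac{1}{12}\int_M s_g^2\,d\mu_g-12\pi^2\tau(M).
\]
Hence minimizing the Weyl functional over conformally K\"ahler metrics is literally the same problem as minimizing the $L^2$-norm of the scalar curvature over K\"ahler metrics, and an absolute minimizer of one is an absolute minimizer of the other. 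This reduces Theorem~\ref{pace} to two tasks: (i)~to produce a K\"ahler metric $g$ on $M=\CP_2\#2\overline{\CP}_2$ that globally minimizes $\int_M s_g^2\,d\mu_g$, and (ii)~to show that the rescaled metric $h=s_g^{-2}g$ is Einstein.

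For task (i) I would argue class by class. In a fixed K\"ahler class $\Omega$ the Calabi energy is bounded below by a cohomological quantity $\mathcal{A}(\Omega)$ --- the squared $L^2$-norm of the projection of the scalar curvature onto the holomorphy potentials --- with equality exactly for extremal K\"ahler metrics. Because $\mathcal{A}$ is an explicit scale-invariant function on the (three-dimensional) space of K\"ahler classes of $M$, a direct computation should locate a minimizing class $\Omega_0$. The crux is then to exhibit an extremal K\"ahler metric $g$ in $\Omega_0$; granting this, $\int_M s_g^2\,d\mu_g=\mathcal{A}(\Omega_0)=\min_\Omega\mathcal{A}(\Omega)\le\int_M s^2\,d\mu$ for every K\"ahler metric on $M$, so $g$ is the sought global minimizer. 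The existence of extremal representatives in the relevant range of classes is precisely what the new existence theorems of this paper are meant to supply.

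Establishing those existence theorems is where I expect the genuine difficulty to lie, and I would carry it out by the deformation-and-bubbling scheme indicated in the abstract. Since $\CP_2\#3\overline{\CP}_2$ is a Del Pezzo surface with reductive automorphism group, it carries a K\"ahler-Einstein metric in its anticanonical class. Starting there, I would move the K\"ahler class along a path in the K\"ahler cone of $\CP_2\#3\overline{\CP}_2$ that drives the volume of a single $(-1)$-curve to zero, keeping an extremal representative at each stage by a continuity argument (openness from the implicit function theorem and the invertibility of the relevant Lichnerowicz operator, closedness from a priori curvature estimates). As the exceptional curve collapses, curvature concentrates and a scalar-flat K\"ahler ALE space of Burns type bubbles off, while the ambient limit is a smooth extremal K\"ahler metric on the blow-down $\CP_2\#2\overline{\CP}_2$ lying in the desired class $\Omega_0$. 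Controlling this degeneration --- showing that the only concentration is the expected single bubble and that the limit is a genuine smooth extremal metric in the intended class, rather than a more degenerate object --- is the main obstacle.

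Finally, for task (ii) I would appeal to Derdzi{\'n}ski's theory of conformally K\"ahler Einstein metrics on complex surfaces. An extremal K\"ahler metric satisfies $\delta W^+=0$, and for such a metric with nowhere-vanishing scalar curvature the conformally rescaled metric $h=s_g^{-2}g$ is Einstein exactly when its K\"ahler class is a critical point of $\mathcal{A}$; this is automatic here because $\Omega_0$ was chosen to minimize $\mathcal{A}$. Positivity of $s_g$ follows from $c_1\cdot\Omega_0>0$ together with the fact that $g$ is obtained as a limit of positive-scalar-curvature extremal metrics, so the rescaling is defined on all of $M$. Assembling the pieces, $h$ is a conformally K\"ahler Einstein metric whose K\"ahler partner $g$ minimizes $\int_M s^2\,d\mu$ among all K\"ahler metrics, and therefore $h$ realizes the absolute minimum of the Weyl functional among conformally K\"ahler metrics, which is the assertion of Theorem~\ref{pace}.
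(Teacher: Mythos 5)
Your overall route is the paper's: reduce the Weyl functional to the Calabi energy via $|W_+|^2=s^2/24$, minimize the cohomological lower bound $\mathcal A$ over the K\"ahler cone, represent the minimizing class by an extremal metric produced by a continuity-plus-bubbling argument seeded at the K\"ahler--Einstein metric on $\CP_2\#3\overline{\CP}_2$, and conformally rescale by $s^{-2}$ at a critical point of $\mathcal A$. But there is a genuine gap at the step ``a direct computation should locate a minimizing class $\Omega_0$.'' The K\"ahler cone modulo scale is a non-compact open set, and $\mathcal A$ is a complicated rational function; nothing in your argument prevents the infimum from being approached only at the boundary of the cone or at infinity, in which case there is no minimizing class and the whole scheme collapses. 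The paper's way around this is structural: it writes $\mathcal A=\mathcal T+\mathcal B$ with $\mathcal T(\Omega)=(c_1\cdot\Omega)^2/\Omega^2$ and proves the uniform bound $0\le\mathcal B<\tfrac14$ on the entire K\"ahler cone (Lemma~\ref{up1}, a substantial computer-assisted computation), together with the fact that the sublevel set $\{\mathcal T\le t\}$ is compact modulo scale for $7<t<8$ (Lemma~\ref{discus}). Then $\inf\mathcal A<\mathcal T(c_1)+\tfrac14=7.25$ while $\mathcal A\ge\mathcal T\ge 7.25$ outside the compact set $\{\mathcal T\le 7.25\}$, so the minimum is attained at an interior point --- which moreover automatically satisfies the hypothesis $\mathcal T\le c_1^2+3.25$ of the existence theorem, another point your sketch does not secure. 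Some such properness argument is indispensable, not a computational afterthought.

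A second, smaller gap: your justification of $s_g>0$ (positive average plus being a limit of positive-scalar-curvature metrics) only yields $s_g\ge 0$, and a zero of $s$ would destroy the conformal rescaling $h=s^{-2}g$. The paper instead exploits the toric structure: for an extremal metric $s$ is an affine function of the moment-map coordinates, so $s_{\min}$ is attained at a vertex of the moment polygon, and an explicit evaluation there (Lemma~\ref{pos2}) gives a strictly positive, class-dependent lower bound --- which is also exactly what is needed to run the Chen--Weber compactness theorem in your continuity argument.
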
 

For a definition of the Weyl functional, see \S \ref{bach} below. Notice that 
Theorem~\ref{pace} does not  actually  assert that the relevant Einstein metric 
actually coincides with the metric of \cite{chenlebweb}. However, 
  this is  actually true, 
  as a consequence  of   uniqueness results recently 
  proved in  \cite{lebuniq}. 

Another  main purpose of the present article is to  prove new existence results 
 for extremal K\"ahler metrics. On any toric Del Pezzo surface, we 
 show, in  Theorems \ref{laudate} and \ref{gaudete} below, 
 that any K\"ahler class in a  large, specific  neighborhood of $c_1$ is represented by an 
 extremal K\"ahler metric.
  This    not  only   implies  Theorem \ref{pace}, 
  but also allows us to prove the following:

\begin{main}\label{quoniam}
Let $g$ be the conformally Einstein, 
 K\"ahler metric on $\CP_2 \# 2\overline{\CP}_2$ 
 discussed in Theorem \ref{pace}. Then there is a 
$1$-parameter family $g_t$, $t\in [0,1)$ of extremal K\"ahler metrics 
 on  $\CP_2 \# 3\overline{\CP}_2$,
such that $g_0$ is K\"ahler-Einstein, and such that $g_{t_j}\to g$
in the Gromov-Hausdorff sense for some $t_j\nearrow 1$. 
\end{main}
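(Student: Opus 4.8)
The plan is to realize $g$ as a non-collapsed Gromov--Hausdorff limit of extremal K\"ahler metrics on $N:=\CP_2\#3\overline{\CP}_2$ that arise from contracting one of the three exceptional curves. Fix a basis $H,E_1,E_2,E_3$ of $H^2(N)$ with $H^2=1$, $E_i^2=-1$, $H\cdot E_i=0=E_i\cdot E_j$ for $i\ne j$, so that $c_1(N)=3H-E_1-E_2-E_3$. Since $N$ is a toric Del Pezzo surface, Theorems \ref{laudate} and \ref{gaudete} supply extremal K\"ahler metrics in every K\"ahler class lying in a large explicit neighborhood of $c_1(N)$. I would first choose a smooth path $t\mapsto\Omega_t$, $t\in[0,1)$, of K\"ahler classes inside this neighborhood, with $\Omega_0=c_1(N)$ and with the property that the area $\Omega_t\cdot E_3$ of the third exceptional curve tends to $0$ as $t\nearrow1$, while $\Omega_t$ converges to a nef class $\Omega_\infty$ that is the pullback of a genuine K\"ahler class $\bar\Omega$ on the blow-down $M=\CP_2\#2\overline{\CP}_2$. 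Letting $g_t$ be the extremal representative of $\Omega_t$ produces the desired family; and because $\Omega_0=c_1(N)$ and $N$ carries a K\"ahler--Einstein metric, the extremal metric $g_0$ is, by uniqueness of extremal representatives, exactly K\"ahler--Einstein after normalization.

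The second step is to pin down $\bar\Omega$ so that its extremal representative is forced to be $g$. For this I would use the exact formula for the minimal Calabi energy $\mathcal{A}(\Omega)=\inf_{\omega\in\Omega}\int s^2\,d\mu$ of an extremal class, which on a complex surface is an explicit rational function of the intersection numbers of $\Omega$ and $c_1$ together with the extremal vector field determined by the torus action and the class. Computing $\mathcal{A}(\Omega_t)$ along the path and letting $t\nearrow1$, I would verify that $\lim_{t\to1}\mathcal{A}(\Omega_t)=\mathcal{A}_M(\bar\Omega)$, and that the endpoint class $\bar\Omega$ on $M$ is precisely the K\"ahler class in which the conformally Einstein metric $g$ of Theorem \ref{pace} is extremal. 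By Theorem \ref{pace}, $g$ minimizes $\int s^2\,d\mu$ among all K\"ahler metrics on $M$, so that $\mathcal{A}_M(\bar\Omega)=\int_M s_g^2\,d\mu_g$.

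With the classes fixed, the third step is a compactness argument. Because the $g_t$ are extremal, their Calabi energies are uniformly bounded; combined with the Gauss--Bonnet and signature formulae this controls $\int|\Riem|^2\,d\mu$, and the chosen normalization keeps the volumes and Sobolev constants bounded away from the collapsing curve. Invoking the orbifold compactness theory for K\"ahler metrics of bounded Calabi energy (in the spirit of Anderson, Tian, and Chen--Weber), I would extract a subsequence $t_j\nearrow1$ along which $g_{t_j}$ converges in the Gromov--Hausdorff sense to an extremal K\"ahler orbifold $(X_\infty,g_\infty)$, with at worst finitely many ALE bubbles splitting off at the points where curvature concentrates.

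The crux of the argument, and the main obstacle, is to show that this degeneration is clean: that contracting $E_3$ produces exactly the smooth manifold $M$, with no residual orbifold points, and that no Calabi energy is lost into nontrivial bubbles. Here I would argue by energy accounting. Any nontrivial bubble carries a strictly positive contribution to $\int s^2$, which would force the energy of the compact limit $X_\infty$ to fall strictly below $\mathcal{A}_M(\bar\Omega)$; but $g_\infty$ is extremal on $X_\infty$ in the class $\bar\Omega$, so its energy is at least the minimal extremal energy of that class. Matching the two energies, via the computation of the second step, rules out both bubbles and orbifold singularities, forcing $X_\infty=M$ and $g_\infty$ to be smooth. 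Finally $g_\infty$ is then an extremal K\"ahler metric on $M$ realizing the minimal energy $\int_M s_g^2\,d\mu_g$ in the class $\bar\Omega$, so by the characterization of $g$ in Theorem \ref{pace} together with the uniqueness of the minimizer we must have $g_\infty=g$ up to automorphism and scale. This yields $g_{t_j}\to g$ in the Gromov--Hausdorff sense, completing the proof.
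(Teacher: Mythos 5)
Your overall skeleton---join $c_1(N)$ to the pullback of a class $\bar\Omega$ from $M=\CP_2\#2\overline{\CP}_2$ by a path of K\"ahler classes on $N=\CP_2\#3\overline{\CP}_2$, represent each class by an extremal metric via Theorem~\ref{gaudete}, and extract a Gromov--Hausdorff limit by Chen--Weber compactness---is exactly the paper's route (Propositions~\ref{sesame} and~\ref{dolce}). But your crux step, the ``energy accounting'' used to rule out bubbles and orbifold points, fails for two reasons. First, the deepest bubbles in this compactness theory are ALE \emph{scalar-flat} K\"ahler surfaces: they carry \emph{zero} Calabi energy $\int s^2\,d\mu$, so no amount of bookkeeping with $\int s^2$ can detect them, let alone exclude them. (They do carry curvature energy $\int|W_-|^2$, but that is not the quantity your minimization argument controls.) Second, and more fundamentally, bubbling here is \emph{unavoidable} and is not something to be ruled out: the limit space has $b_2=3$ while $N$ has $b_2=4$, and the area $\Omega_t\cdot E_3\to 0$ forces $\sup K\to\infty$ on the shrinking totally geodesic $2$-sphere by Gauss--Bonnet. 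Your conclusion that ``$X_\infty=M$ and $g_\infty$ is smooth'' with ``no nontrivial bubbles'' is self-contradictory: a change of diffeomorphism type in the limit is possible only because a nontrivial bubble forms.

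What the paper actually does at this point is a homological classification of the possible bubbles rather than an energy estimate: the adjunction formula plus Wirtinger's inequality shows that any spherical class $A$ carried by a bubble satisfies $\Omega_\infty\cdot A=0$ and $c_1\cdot A=2-k>0$, forcing $k=1$; among the finitely many homological $(-1)$-classes only the collapsing exceptional curve has area tending to zero; hence exactly one bubble forms, it is asymptotically Euclidean with $b_2=1$, and by the Burns--Poon uniqueness theorem it is the Burns metric on ${\mathcal O}(-1)$. The smoothness of the limit (no orbifold point) then follows because the link of the collapsed $(-1)$-sphere is simply connected, and the identification of the limit as $M$ with K\"ahler class $\bar\Omega$ comes from tracking the surviving toric fixed curves, not from matching energies. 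You would need to replace your fourth step by an argument of this kind. Two smaller gaps: the uniform Sobolev bound is not a consequence of ``the chosen normalization'' but of the quantitative inequality ${\mathcal A}<\frac{3}{2}c_1^2-\epsilon$, which requires the Futaki-term bound ${\mathcal B}<\frac14$ of Lemma~\ref{up2} together with ${\mathcal T}(\bar\Omega)<8.75$; and you should check (as the paper does via the straight-line path and the reverse Cauchy--Schwarz inequality) that ${\mathcal T}$ stays below the threshold of Theorem~\ref{gaudete} along the whole path, not merely near its endpoints.
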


The approach that will be developed here hinges on a systematic   study of the 
squared $L^2$-norm 
$${\mathcal C}(g)= \int_M s^2_g ~d\mu_g$$
of the scalar curvature, restricted to the space of K\"ahler metrics. 
An important  variational problem for this functional was first
studied by Calabi \cite{calabix,calabix2}, 
who constrained  $g$ to only vary in a  fixed  K\"ahler class
$[\omega ] \in H^{2} (M,\RR)$.  Calabi called the  critical metrics of
his restricted problem  {\em extremal K\"ahler metrics}, and showed
that the relevant Euler-Lagrange equations 
 are equivalent to requiring that
$\nabla^{1,0}s$ be a holomorphic vector field. In fact, 
every extremal K\"ahler metric turns out to be an 
absolute minimizer for the Calabi problem, and the proof of this  \cite{xxel}
moreover implies  the sharp estimate 
$$
\frac{1}{32\pi^2}\int_M s^2_g ~d\mu_g \geq 
\frac{(c_1\cdot \Omega )^2}{\Omega^2}+ \frac{1}{32\pi^2}\|{\mathfrak F}(\Omega) \|^2
$$
with equality iff $g$ is an extremal K\"ahler metric. Here 
$${\mathfrak F}(\Omega ) : H^0(M, {\mathcal O}(T^{1,0}M))\to \CC$$
denotes the Futaki invariant, and the relevant norm is the one induced by the 
$L^2$-norm on the space of  holomorphy potentials  \cite{fuma0}. In particular, 
for any extremal K\"ahler metric $g$ with K\"ahler class $\Omega$, one has 
\begin{eqnarray*}
\int_M s_0^2 ~d\mu_g &=& 32\pi^2 \frac{(c_1\cdot \Omega )^2}{\Omega^2}\\
\int_M (s-s_0)^2_g ~d\mu_g &=& \|{\mathfrak F}(\Omega)\|^2
\end{eqnarray*}
where 
$$
s_0= \fint_M s ~d\mu_g = \frac{\int s~d\mu_g}{\int d\mu_g}
$$
denotes the average scalar curvature. 
Thus, letting ${\zap K} \subset H^2 (M, \RR)$ be the K\"ahler cone of 
$(M,J)$, we are led to consider  the action function
  ${\mathcal A}: {\zap K} \to \RR$
  defined by 
  \begin{equation}
\label{action}
{\mathcal A}(\Omega) =  
\frac{(c_1\cdot \Omega )^2}{\Omega^2}+ \frac{1}{32\pi^2}\|{\mathfrak F}\|^2
\end{equation}
which we  write  schematically as
$$
{\mathcal A}(\Omega) = {\mathcal T} (\Omega ) + {\mathcal B} (\Omega ) 
$$ 
where 
$$
{\mathcal T} (\Omega )  = \frac{(c_1\cdot \Omega )^2}{\Omega^2}
$$
is a manifestly   {\em topological term}, and where 
the Futaki term 
$$
 {\mathcal B} (\Omega ) = \frac{1}{32\pi^2}\|{\mathfrak F}(\Omega )\|^2
$$
will eventually be shown to  be uniformly {\em  bounded}.

In practice, ${\mathcal A}$ is an explicitly computable
 rational function of several variables. Nonetheless, the 
actual expression is sufficiently  complicated  that 
the judicious use of  computer algebra is of enormous help in 
reliably obtaining the correct answer. For this reason, 
a number of the proofs presented here partially depend  on  
calculations carried out with the assistance of  {\em Mathematica}.
However, 
these calculations are merely elaborate 
 algebraic  manipulations which 
could, in principle,  be directly verified by  a careful human with 
sufficient time and patience. 
For clarity of presentation, we have grouped  these computer-assisted
  calculations  into 
two appendices. In spite of their typographical location, these appendices are
logically independent on the rest of the paper, and  might 
rightly be said to represent  the beginning rather than the end of the  article.
For this reason, results proved in the appendices  are  freely cited 
throughout the body of the paper, without the slightest danger  of circular reasoning.

\section{Extremal K\"ahler Metrics} \label{xtr} 

In this section, we will prove two results on the existence of extremal K\"ahler metrics.
While these appear  \cite{dontor2}  to      be   of genuinely  independent  interest, 
we will be principally interested in them here because of the 
key  role they will play  in the proofs 
of Theorems \ref{pace} and \ref{quoniam}. 

\begin{thm} \label{laudate}
Let $M\approx \CP_2\# 2 \overline{\CP}_2$ be the blow-up of 
$\CP_2$ at two distinct points, and let $[\omega ]$ be a K\"ahler class
 on $M$ for which 
 $${\mathcal T}([\omega]) := \frac{(c_1\cdot [\omega])^2}{[\omega ]^2} \leq 
 \frac{3}{2} c_1^2 - \frac{1}{4} =  c_1^2 + 3.25. $$ 
 Then there is an extremal K\"ahler metric $g$ on $M$ with K\"ahler form 
 $\omega \in [\omega ]$. 
\end{thm}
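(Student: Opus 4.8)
The plan is to exploit the toric structure of $M \approx \CP_2 \# 2\overline{\CP}_2$ and to run a continuity method in the K\"ahler cone, using the action functional $\mathcal{A}$ to rule out degenerations. Since $M$ is a toric Del Pezzo surface, $H^2(M,\RR)$ is three-dimensional, and after normalizing the overall scale and exploiting the symmetry interchanging the two exceptional curves, the relevant K\"ahler classes are described by a two-parameter family of Delzant polytopes. In these coordinates the hypothesis $\mathcal{T}([\omega]) \leq \frac{3}{2}c_1^2 - \frac{1}{4}$ cuts out an explicit region $\mathcal{U} \subset {\zap K}$, and the goal is to show that every class in $\mathcal{U}$ carries an extremal representative. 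Recall that, because $c_1 \in {\zap K}$, the Hodge index theorem gives $\mathcal{T}(\Omega) \geq c_1^2$ throughout ${\zap K}$, with equality only along the ray through $c_1$; thus $\mathcal{U}$ is a neighborhood of $c_1$ bounded by the level set $\mathcal{T} = c_1^2 + 3.25$, and one checks directly that it is connected.

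Let $\mathcal{E} \subset {\zap K}$ denote the set of K\"ahler classes admitting extremal metrics. First I would argue that $\mathcal{E} \cap \mathcal{U}$ is nonempty and open. Openness is the LeBrun--Simanca deformation theorem: extremal K\"ahler metrics persist under small perturbations of the K\"ahler class. For nonemptiness, one applies the Arezzo--Pacard--Singer blow-up construction to the Fubini--Study metric on $\CP_2$, blowing up the two points; this produces extremal metrics on $M$ whenever both exceptional divisors are sufficiently small. Such classes lie close to the pullback of the hyperplane class, where a direct computation gives $\mathcal{T} \to 9 < c_1^2 + 3.25 = 10.25$, so in particular $\mathcal{E} \cap \mathcal{U} \neq \emptyset$.

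The heart of the matter is to show that $\mathcal{E} \cap \mathcal{U}$ is also \emph{closed} in $\mathcal{U}$; since $\mathcal{U}$ is connected, it will then follow that $\mathcal{E} \cap \mathcal{U} = \mathcal{U}$. So suppose $\Omega_j \in \mathcal{E} \cap \mathcal{U}$ with $\Omega_j \to \Omega_\infty \in \mathcal{U}$, and let $g_j$ be the corresponding extremal metrics. Because each $g_j$ minimizes the Calabi energy in its class, the sharp estimate of \S1 gives $\frac{1}{32\pi^2}\mathcal{C}(g_j) = \mathcal{A}(\Omega_j) = \mathcal{T}(\Omega_j) + \mathcal{B}(\Omega_j)$, which is uniformly bounded: $\mathcal{T}$ is controlled on $\mathcal{U}$ by hypothesis, and the Futaki term $\mathcal{B}$ is bounded throughout ${\zap K}$. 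One then invokes the compactness theory for extremal metrics of bounded Calabi energy on a four-manifold: after passing to a subsequence, the $g_j$ converge in the Gromov--Hausdorff sense to an extremal K\"ahler orbifold, with at most finitely many bubbles split off. The decisive step is an energy-accounting argument showing that any nontrivial bubbling, or any jump to a strictly singular limit, consumes a definite quantum of Calabi energy, whereas the bound $\mathcal{T} \leq \frac{3}{2}c_1^2 - \frac{1}{4}$ forces the available energy $\mathcal{A}(\Omega_\infty)$ to fall strictly below the threshold at which any such degeneration can occur. Hence the limit is a smooth extremal metric on $M$ in the class $\Omega_\infty$, so $\Omega_\infty \in \mathcal{E}$.

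I expect this last, quantitative no-degeneration estimate to be the main obstacle. Carrying it out requires (i) enumerating the orbifold limits and bubble types compatible with the topology of $M$, and (ii) computing, for each admissible degeneration, the exact drop in $\mathcal{A}$ and comparing it with the explicit value of $\mathcal{A}(\Omega_\infty)$. Since both $\mathcal{A}$ and the Futaki invariant $\mathfrak{F}$ are complicated rational functions of the two polytope parameters, the comparison is best verified with computer algebra, and the precise constant $\frac{3}{2}c_1^2 - \frac{1}{4}$ should emerge as the exact boundary of the region in which no degeneration can steal enough energy. The combinatorial classification of the possible degenerations, together with the sharp calibration of this threshold, is the delicate part of the argument.
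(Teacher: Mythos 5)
Your overall architecture (continuity in the K\"ahler cone, LeBrun--Simanca openness, compactness for extremal metrics with controlled $\mathcal{A}=\mathcal{T}+\mathcal{B}$, and a computer-algebra bound on the Futaki term $\mathcal{B}$) matches the paper's, and your starting point via Arezzo--Pacard--Singer near the pullback of the hyperplane class is a legitimate alternative to the paper's use of an extremal metric in $c_1$. But the step you defer --- the ``quantitative no-degeneration estimate'' --- is exactly where the proposal breaks, and the mechanism you propose for it would fail. The deepest bubbles in this setting are ALE \emph{scalar-flat} K\"ahler surfaces, so they carry \emph{zero} Calabi energy: no amount of calibrating $\mathcal{A}(\Omega_\infty)$ below a threshold can rule them out by an accounting of $\int s^2\,d\mu$, because bubbling does not consume any quantum of that functional. (What bubbles do consume is $\int|W_-|^2$ or $\int|{\rm Riem}|^2$, which is not controlled by the hypothesis.) Relatedly, you have misidentified the role of the constant $\tfrac{3}{2}c_1^2-\tfrac{1}{4}$: it is not an energy-quantization threshold for degenerations, but rather the Chen--LeBrun--Weber Sobolev-constant threshold $\mathcal{A}<\tfrac{3}{2}c_1^2$ (needed merely to get orbifold Gromov--Hausdorff compactness at all) minus the uniform bound $\mathcal{B}<\tfrac14$. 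You also omit the uniform positivity and boundedness of the scalar curvature of the extremal metrics along the path, which is the other input required for the Sobolev bound.

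The actual exclusion of bubbling in the paper is homological and symplectic, not energetic. A toric deepest bubble has $H_2\neq 0$ generated by embedded holomorphic spheres; transplanting such a sphere into $M$ gives, for large $j$, an $\omega_j$-symplectic $2$-sphere in a fixed class $A$ with $A\cdot A=-k$, $c_1\cdot A=2-k$ (adjunction), and area tending to $0$, whence $\Omega_\infty\cdot A=0$; positivity of $[\omega_j]\cdot A$ along the segment toward $c_1$ then forces $c_1\cdot A>0$, i.e.\ $k=1$; and an explicit enumeration shows every integral class with $A^2=-1$, $c_1\cdot A=1$ on $\CP_2\#2\overline{\CP}_2$ is represented by a holomorphic $(-1)$-curve, which has positive area in every K\"ahler class --- a contradiction. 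Without this (or an equivalent) argument, your closedness step is unproved, so the proposal as written has a genuine gap at its central point.
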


\begin{thm}\label{gaudete} 
Let $M\approx \CP_2\# 3 \overline{\CP}_2$ be the blow-up of 
$\CP_2$ at three non-collinear points, and let $[\omega ]$ be a K\"ahler class
 on $M$ for which 
 $${\mathcal T}([\omega]) := \frac{(c_1\cdot [\omega])^2}{[\omega ]^2} \leq
   \frac{3}{2} c_1^2 - \frac{1}{4} =  c_1^2 + 2.75.$$ 
 Then there is an extremal K\"ahler metric $g$ on $M$ with K\"ahler form 
 $\omega \in [\omega ]$. 
\end{thm}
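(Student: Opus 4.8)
The plan is to establish this by a continuity method within the K\"ahler cone ${\zap K}\subset H^2(M,\RR)$, running parallel to the argument for Theorem~\ref{laudate} but exploiting one genuine simplification: unlike $\CP_2\#2\overline{\CP}_2$, the surface $M\approx\CP_2\#3\overline{\CP}_2$ blown up at three non-collinear points is a \emph{K\"ahler--Einstein} toric Del Pezzo surface, so the anticanonical class already carries an extremal (indeed constant-scalar-curvature) representative. Observe first that by the Hodge index theorem $(c_1\cdot\Omega)^2\geq c_1^2\,\Omega^2$ for every $\Omega\in{\zap K}$, so ${\mathcal T}(\Omega)\geq c_1^2$ with equality exactly along the ray $\RR^+c_1$; thus the region $R=\{\,\Omega\in{\zap K}: {\mathcal T}(\Omega)\leq \tfrac32 c_1^2-\tfrac14\,\}$ is a connected, scale-invariant tube about the anticanonical ray, whose core $c_1$ is where I would start.

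Let $\mathcal{E}\subseteq{\zap K}$ denote the set of K\"ahler classes admitting an extremal representative. The first step is \emph{openness} of $\mathcal{E}$: I would invoke the deformation theory of LeBrun--Simanca, with the Arezzo--Pacard--Singer refinement adapted to the blow-up structure, which produces extremal metrics in all nearby classes once the linearized operator is understood modulo the reduced automorphism group. Since $R$ is connected and every class in it is joined to $c_1$ by a path lying in $R$, it then suffices to prove that $\mathcal{E}\cap R$ is \emph{closed} in $R$.

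For closedness, take classes $\Omega_j\in\mathcal{E}\cap R$ with $\Omega_j\to\Omega_\infty\in R$ and extremal representatives $g_j$. The crucial uniform control is on energy: by the sharp estimate recalled in the introduction, $\tfrac{1}{32\pi^2}\int_M s_{g_j}^2\,d\mu_{g_j}={\mathcal A}(\Omega_j)={\mathcal T}(\Omega_j)+{\mathcal B}(\Omega_j)$, and since ${\mathcal T}(\Omega_j)\leq\tfrac32 c_1^2-\tfrac14$ while ${\mathcal B}$ is uniformly bounded, the Calabi energies of the $g_j$ are uniformly bounded. I would then apply the compactness theory for extremal K\"ahler metrics of bounded Calabi energy, in the Chen--Weber / Tian--Viaclovsky circle of ideas: after normalization and passage to a subsequence, $(M,g_j)$ converges, in the Gromov--Hausdorff sense and smoothly away from finitely many points, to a K\"ahler orbifold, with scalar-flat K\"ahler ALE spaces bubbling off at the isolated singular points.

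The main obstacle is to \emph{rule out bubbling} under the stated bound, and this is where the precise constant $\tfrac32 c_1^2-\tfrac14$ is forced. The strategy is an energy-accounting argument: the Gauss--Bonnet and signature formulas, applied to the orbifold limit and corrected by the topological contributions of each ALE bubble, bound from below the energy $\lim_j\int_M s_{g_j}^2\,d\mu_{g_j}$ that any nontrivial degeneration must consume. Because the bubbles compatible with the complex geometry of $M$ arise from blowing down $(-1)$-curves, and hence are modeled on Burns- or Eguchi--Hanson-type ALE metrics, one can make this lower bound explicit; the hypothesis ${\mathcal T}\leq\tfrac32 c_1^2-\tfrac14$ is calibrated to sit strictly below the first such threshold, so no bubble can form. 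Verifying the resulting inequalities is exactly where the explicit rational expression for ${\mathcal A}$ and the computer-assisted calculations of the appendices do the real work. Once bubbling is excluded, the convergence $g_j\to g_\infty$ is smooth modulo diffeomorphisms, $g_\infty$ is a genuine extremal K\"ahler metric in the class $\Omega_\infty$, so $\Omega_\infty\in\mathcal{E}$; this proves closedness and therefore $\mathcal{E}\supseteq R$, which is the assertion. The appearance of the identical constant $\tfrac32 c_1^2-\tfrac14$ here and in Theorem~\ref{laudate} reflects that in both cases it marks precisely the onset of bubbling --- the same phenomenon that, just past this threshold, will later produce the degeneration of Theorem~\ref{quoniam}.
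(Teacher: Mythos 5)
Your overall architecture --- continuity method from the K\"ahler--Einstein class $c_1$, openness via LeBrun--Simanca, closedness via Chen--Weber compactness with the bound ${\mathcal A}={\mathcal T}+{\mathcal B}$ and ${\mathcal B}<\tfrac14$ --- matches the paper. (One small caveat: what the threshold $\tfrac32 c_1^2$ actually buys in \cite{chenlebweb} is a uniform bound on the \emph{Sobolev constants}, and this also requires the uniform positivity and boundedness of the scalar curvature supplied by Lemma \ref{pos3}; ``bounded Calabi energy'' alone is not the hypothesis of the compactness theorem.)

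The genuine gap is your mechanism for excluding bubbles. You propose an energy-accounting argument: each ALE bubble consumes a quantum of energy, and the constant $\tfrac32 c_1^2-\tfrac14$ is ``calibrated to sit strictly below the first such threshold, so no bubble can form.'' This cannot work as stated, for two reasons. First, the bubbles in this setting are scalar-flat K\"ahler ALE surfaces, and for K\"ahler metrics $|W_+|^2=s^2/24$, so a bubble consumes \emph{zero} Calabi energy; the functional you are controlling simply does not see the degeneration, and there is no energy gap to exploit at the level of $\int s^2$. Second, and decisively, the paper's own Proposition \ref{sesame} exhibits sequences of extremal metrics on $\CP_2\#3\overline{\CP}_2$ with ${\mathcal T}<8.75=\tfrac32 c_1^2-\tfrac14$ that \emph{do} bubble (producing the Burns metric as the limit class reaches the boundary of the K\"ahler cone). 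So no purely energetic threshold below $\tfrac32 c_1^2-\tfrac14$ can rule out bubbling; the constant is not calibrated against bubble energies at all --- the $\tfrac32 c_1^2$ comes from the Sobolev estimate and the $\tfrac14$ from the uniform bound on the Futaki term ${\mathcal B}$. What actually excludes bubbling in the theorem is cohomological: a bubble carries a homology class $A$ with $A^2=-k$ that is represented by $\omega_j$-symplectic spheres of area tending to zero, forcing $\Omega\cdot A=0$ for the limit class $\Omega$; writing $[\omega_j]=u_jc_1+(1-u_j)\Omega$ with $u_j>0$ and using symplectic positivity gives $c_1\cdot A>0$, hence $k=1$ by adjunction; and the finitely many classes with $A^2=-1$, $c_1\cdot A=1$ are all represented by holomorphic $(-1)$-curves, which have strictly positive area in every \emph{interior} K\"ahler class, contradicting $\Omega\cdot A=0$. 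This is exactly why bubbling is impossible in Theorem \ref{gaudete} but unavoidable in Proposition \ref{sesame}, where $\Omega$ sits on the boundary of the cone and annihilates $E_1$. You would need to replace your energy argument with something of this kind; as written, the step that ``no bubble can form'' is unsupported and, at the proposed level of generality, false.
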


To prove these results, we will rely on a continuity method argument analogous  to the
one
 used to prove 
\cite[Theorem 27]{chenlebweb}. This time, however, 
we will start at the anti-canonical
class $c_1$ and work outward. To make this feasible, we will 
temporarily just   assume that $c_1$ is represented by 
an extremal K\"ahler metric. For  $M= \CP_2\# 3 \overline{\CP}_2$, 
this is assumption is certainly valid, since the anti-canonical class class contains the 
K\"ahler-Einstein metric of Siu \cite{s}.  In the case of  $M= \CP_2\# 2 \overline{\CP}_2$,
we will eventually validate this assumption  by providing a new 
proof  in  Proposition \ref{sesame} below; in the interim, however,
we will try to put the 
  reader at ease by mentioning  that this fact has  actually  been previously 
 proved elsewhere \cite{chenlebweb,he} using   different methods.

By rescaling, we may also assume that 
the  target K\"ahler class $[\omega ]$ satisfies 
$c_1\cdot [\omega ] = c_1^2$, so   that $[\omega ] = c_1+\eta$ for some
$\eta \in H^2(M, \RR)$ with $c_1\cdot \eta =0$. We now join 
 $c_1$ to the given 
$[\omega]$ by a straight line segment
$$[0,1]\ni t\longmapsto [\omega_t]:= (1-t) c_1+ t [\omega] = c_1+ t\eta$$
and notice that the $[\omega_t]$ are all K\"ahler classes, by convexity of the K\"ahler cone. 
Since $\eta^2 < 0$, 
$$[\omega_t]^2 = (c_1+t\eta)^2 = c_1^2 +t^2 \eta^2 \geq c_1^2 + \eta^2 = [\omega]^2,$$
so that 
$${\mathcal T}([\omega_t]) = \frac{(c_1\cdot [\omega_t])^2}{[\omega_t]^2} \leq 
\frac{(c_1\cdot [\omega])^2}{[\omega]^2} = {\mathcal T}([\omega]) \leq  \frac{3}{2} c_1^2 - \frac{1}{4} ~~\forall t\in [0,1].$$
We are  therefore required to prove the existence of a solution in each $[\omega_t]$. It is therefore natural 
to  consider the set 
$${\zap E} = \left\{ t\in [0,1]~|~ [\omega_t] \mbox{ is represented by an extremal K\"ahler metric}
\right\},$$
and define 
$${\mathfrak t} = \sup \{ t\in [0, 1]~|~ [0,t]\subset {\zap E}\}.$$
We have already assumed   that $0\in {\zap E}$, so ${\zap E}\neq \varnothing$,
and ${\mathfrak t}\in [0,1]$. 
On the other hand,  an inverse-function theorem  argument \cite{ls2}
implies that ${\zap E}$ is open in $[0,1]$. One connected component
of   ${\zap E}$  therefore   either takes the form $[0, {\mathfrak t})$ or $[0,1]$.
It therefore suffices to show 
that ${\mathfrak t}\in {\zap E}$, as this will then immediately  imply that ${\zap E}= [0,1]$.

To attain this goal, we will make systematic use of  the weak compactness theorem of 
Chen and Weber \cite{chenweb}. This result guarantees that, given a sequence
of unit-volume extremal K\"ahler metrics on a compact complex surface, 
one can extract a subsequence
which Gromov-Hausdorff converges to an extremal K\"ahler orbifold metric, 
{\em provided}  there is a uniform upper bound on the Sobolev constants.
Such an upper bound can in turn be guaranteed \cite{chenlebweb} if the metrics in question 
have uniformly  bounded, positive scalar curvature, 
and if all belong to the controlled cone 
\begin{equation}
\label{control}
{\mathcal A} ([\omega ]) < \frac{3}{2} c_1^2 -\epsilon
\end{equation}
for some $\epsilon > 0$, where 
$$\mathcal A = \frac{1}{32\pi^2} \int_Ms^2d\mu$$
for an extremal K\"ahler metric. However,  
$${\mathcal A} = {\mathcal T} + {\mathcal B}, $$
where
$$
\mathcal B = \frac{1}{32\pi^2} \int_M(s-s_0)^2d\mu
$$
for an extremal K\"ahler metric, and 
we  show in Lemmas \ref{up1} and \ref{up2}  that 
$$
\mathcal B < \frac{1}{4}
$$
for every K\"ahler class on either of these manifolds. On either of these manifolds, 
it follows that  the inequality (\ref{control}) 
holds for any convergent sequence of K\"ahler classes with 
$$
{\mathcal T} \leq \frac{3}{2}c_1^2 -\frac{1}{4}
$$
where $\epsilon$ is the infimum  of $\frac{1}{4} - {\mathcal B}$
over a small neighborhood of the limit class.
Moreover, Lemmas \ref{pos2} and \ref{pos3}  show
that  extremal K\"ahler metrics on these manifolds always have 
 everywhere-positive scalar curvature which is  uniformly bounded
on any compact  subset of the K\"ahler cone ${\zap K}$. 
 Hence, by rescaling to unit volume and then scaling back, 
every sequence in ${\zap E}$ 
has a subsequence for which the corresponding extremal K\"ahler manifolds $\{ (M, g_{j})\} $  
converge to an extremal K\"ahler orbifold $(N, g_\infty)$ in the Gromov-Hausdorff topology. 
Of course,   the K\"ahler classes $[\omega_j]$  may  simultaneously be taken to 
converge to some K\"ahler class $\Omega
\in \{ [\omega_t]~|~t\in (0,1]\}$.  

We will now  specialize to the case of an 
increasing sequence $t_j \nearrow {\mathfrak t}$, with the goal of showing that 
${\mathfrak t}\in {\zap E}$. 
In order  to show that, modulo diffeomorphisms, the $g_{j}$ actually converge  smoothly  
to a  metric
on the given  $M$, we must  rule out bubbling. Recall \cite{chenlebweb,chenweb}
that smooth convergence will fail only if the sectional curvatures of
our metrics $g_j$ fail to be uniformly bounded, and that when this
happens, after once again passing to a subsequence,  one can 
 find a sequence of rescalings  $\kappa_j^{-1}g_j$, $\kappa_j \to 0$,  and
a sequence of base-points  $p_j\in M$ 
such that $\{ (M,p_j , \kappa_j^{-1}g_j )\}$ 
converges in the pointed Gromov-Hausdorff topology  to a non-trivial  
ALE scalar-flat K\"ahler surface $(X,\hat{g}_\infty)$. Such a pointed limit is
 called a {\em deepest bubble}. 
Because all the metrics in our sequence are toric, so is the 
deepest bubble. This implies \cite[Lemma 17]{chenlebweb}
that $b_2(X)\neq 0$, and that $H_2(X,\ZZ)$ is generated by embedded
holomorphic $\CP_1$'s. Moreover, for large $j$ in the subsequence, 
the pointed Gromov-Hausdorff convergence guarantees that 
$X$ is diffeomorphic
to an  open subset $U_j\subset M$, in such a manner that 
$c_1(X)$ 
obtained by restricting $c_1(M)$ to $U$, and such that $H_2(U_j)$ is generated
by embedded $2$-spheres $S_j$ which are symplectic with respect to the K\"ahler
form $\omega_j$. Finally, the 
homomorphism $H_2(U_j,\ZZ)\to H_2(M,\ZZ)$ induced by inclusion is injective, and
the restriction of the intersection form of $M$ to $U_j$ is negative definite. 

Our strategy will  now  combine ideas  from \cite{chenlebweb} and \cite{bing}. 
Suppose that $(X,\hat{g}_\infty)$ is a deepest bubble arising from some 
sequence $g_j:=g_{t_j}$, where $t_j\nearrow {\mathfrak t}$. Let $S\subset X$
be any holomorphic embedded $\CP_1$, and let $k> 0$ be
the positive integer defined  by $S\cdot S=-k$. 
Then for each  $j$ sufficiently
far out in our subsequence, we can find an $\omega_{t_j}$-symplectic 
$2$-sphere $S_j\subset M$ with $S_j\cdot S_j = -k$, for some fixed positive integer 
$k$. By the adjunction formula, we  would then also have 
 $c_1\cdot S_j = 2-k$.  
As $j$ varies, 
the homology class $[S_j]$ could  in principle change. However, 
since $c_1^2 > 0$ and  $b_+(M)=1$, 
the subset of $H_2(M, \RR)$ defined by
\begin{eqnarray*}
c_1\cdot A &=& 2-k\\
A\cdot A &=& -k
\end{eqnarray*}
is compact, and the set of  $A\in H_2(M, \ZZ)$ satisfying these conditions
is therefore finite. By refining our subsequence, we may therefore assume that 
$[S_j]=[S]$ is independent of $j$. Moreover, since $S$ has finite area
in $(X,\hat{g}_\infty)$, which is a rescaled limit of regions $U_j\subset M$, with magnification 
tending to infinity, we must be able to represent $[S]$ by symplectic $2$-spheres
$S_j$ of arbitrarily small area in $(M, g_j)$ as $j\to\infty$. Since
the area of $S_j$ is $ \geq |[\omega_j]\cdot  [S_j] |$ by Wirtinger's inequality, 
taking the limit as $j\to \infty$  now yields 
$$\Omega \cdot A = 0,$$
where $\Omega = [\omega_{\mathfrak t}]$ is the limit K\"ahler class. 

On the other hand, the sphere $S_j$ is symplectic with respect to
each K\"ahler form $\omega_j= \omega_{t_j}$  in our subsequence. Now, by construction, 
$$[\omega_{j}] =  u_j c_1 + (1-u_j ) \Omega$$
for a sequence of positive numbers $u_j = 1-(t_j/{\mathfrak t})\searrow 0$. Since 
$S_j$ is symplectic, we therefore have 
$[\omega_{j}]\cdot A = [\omega_{j}]\cdot [S] > 0$, for large $j$. Since 
$\Omega \cdot A=0$, this says  that $u_j (c_1 \cdot A  ) = [u_j c_1 + (1-u_j)\Omega ] \cdot A> 0$.
Hence 
$$c_1\cdot A > 0.$$
However,
$c_1\cdot A = 2-k$ by the adjunction formula. We thus conclude that  $k < 2$. 
It follows that $k=1$, thereby reducing  our bubbling problem 
to a single  case.

To deal with the  remaining $k=1$ case, we  now 
 classify the homology classes $A\in H_2(M, \ZZ)$ satisfying 
\begin{eqnarray*}
c_1\cdot A &=& 1 \\
 A \cdot A &=& -1~.
\end{eqnarray*}
 For this purpose, it is best to concentrate on the case of $M=\CP_2 \# 3 \overline{\CP}_2$,
since we can identify $H^2(\CP_2 \# 2 \overline{\CP}_2)$
with a hyperplane in $H^2(\CP_2 \# 3 \overline{\CP}_2)$.
 If we choose a basis for $H_2(\CP_2 \# 3 \overline{\CP}_2)$
consisting of a projective line $L$ and three exceptional divisors 
$E_1$, $E_2$, $E_3$, corresponding to the three blown-up points in
$\CP_2$, the intersection form  then
becomes 
$$
\left(\begin{array}{rrrr}1 &  &&\\ & -1&&\\
 & & -1&\\
 & & &-1\\\end{array}\right)
$$
and $c_1$ is Poincar\'e dual to $(3,-1,-1,-1)$.  
Setting $A=(n,a,b,c)$, we thus have 
\begin{eqnarray*}
3n+a+b+c&=&1\\
n^2-a^2-b^2-c^2 &=& -1
\end{eqnarray*}
and it therefore follows that 
$$5(a^2+b^2+c^2)+(a-b)^2+(a-c)^2+(b-c)^2+(a+1)^2+(b+1)^2+(c+1)^2=13.$$
In particular, $a^2+b^2+c^2 < 3$. On the other hand, 
 $a^2+b^2+c^2= n^2 + 1\geq 1$. Thus, after possibly permuting $E_1$, $E_2$, 
 and $E_3$, we may assume that $c=0$,  that $|a|= 1$, and that $|b|\leq 1$.  
 It is then easy to check that, again modulo permutations of the 
$E_j$, the only solutions are
 $$
 (n,a,b,c)= (0,1,0,0) \mbox{ and } (1,-1,-1,0),
 $$
 respectively corresponding to 
 $$A= E_1  \mbox{ and } A=L-E_1-E_2.$$
 Throwing in permutations, we conclude that there are
 exactly six possibilities 
 $$A= E_1, ~E_2, ~E_3, ~L-E_1-E_2, ~L- E_1 - E_3, ~L-E_2 - E_3$$
 on $M=\CP_2 \# 3 \overline{\CP}_2$, and exactly three possibilities 
  $$A= E_1, ~E_2,  ~L-E_1-E_2$$
  on $M=\CP_2 \# 2 \overline{\CP}_2$. 
 This is good news, because  these classes are actually all  represented by 
 holomorphic (-1)-curves on either choice of 
 $M$. Since these holomorphic  curves must have positive area
 for any K\"ahler metric, any K\"ahler class $\Omega$ on $M$ must therefore satisfy
 $\Omega \cdot A > 0$ for any such class $A=[S]$. This rules out  bubbling 
 when $k=1$, and our previous argument therefore shows that bubbling  has now been  
 definitively
 ruled out
 in 
 all cases.

As $t_j \to {\mathfrak t}$, 
 the sectional curvatures  of the $g_{j}$ therefore 
 remain uniformly bounded, and these metrics 
therefore converge to a smooth, toric, extremal K\"ahler metric
on a complex surface diffeomorphic to $M$.
 The collection of totally geodesic holomorphic curves 
consisting of  the points of non-trivial isotopy must converge to a configuration
of  totally geodesic holomorphic curves  with the same self-intersections as
the original curves in $M$, and with areas obtained by taking na\"{\i}ve limits,
allowing us to read off both the limit complex structure and the limit K\"ahler class. 
This shows that the limit extremal K\"ahler metric is actually compatible
with the original complex structure on $M$, with K\"aher class $\Omega$.
Thus ${\mathfrak t}\in  {\zap E}$, and  hence ${\zap E}= [0,1]$.
The target K\"ahler class  $[\omega ]$ therefore contains an extremal K\"ahler metric,
and Theorems \ref{laudate} and \ref{gaudete} have therefore been proved,
modulo the assumption that $c_1$ contains an extremal K\"ahler metric. 
We now complete the argument by justifying this last   assumption.  

\begin{prop} \label{sesame}
The anti-canonical class  of $\CP_2 \# 2 \overline{\CP}_2$ is represented
by an extremal K\"ahler metric. 
\end{prop}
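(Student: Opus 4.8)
The plan is to manufacture the desired extremal metric on $\CP_2\# 2\overline{\CP}_2$ by collapsing a single exceptional curve inside the already-understood three-point blow-up $M'=\CP_2\# 3\overline{\CP}_2$, whose anti-canonical class is handled by Theorem~\ref{gaudete} without any reference to the present proposition. First I would realize $\CP_2\# 2\overline{\CP}_2$ as the smooth blow-down of $M'$ along the exceptional curve $E_3$, and, exactly as in the $k=1$ classification above, identify $H^2(\CP_2\# 2\overline{\CP}_2,\RR)$ with the hyperplane $\{A : A\cdot E_3=0\}\subset H^2(M',\RR)$. Under this identification the anti-canonical class of $\CP_2\# 2\overline{\CP}_2$ becomes $\Omega=3L-E_1-E_2$, which has $\Omega^2=7$, satisfies $\Omega\cdot E_3=0$, and pairs strictly positively with each of the other five $(-1)$-curves $E_1,E_2,L-E_1-E_2,L-E_1-E_3,L-E_2-E_3$ of $M'$. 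Thus $\Omega$ lies precisely on the wall of the K\"ahler cone of $M'$ along which $E_3$ is contracted.

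Next I would join the anti-canonical class $c_1=3L-E_1-E_2-E_3$ of $M'$ to $\Omega$ by the segment $[\omega_t]=(1-t)c_1+t\,\Omega=(3L-E_1-E_2)-(1-t)E_3$, $t\in[0,1)$. Each $[\omega_t]$ is a genuine K\"ahler class on $M'$, with $[\omega_t]^2=7-(1-t)^2$ and $c_1\cdot[\omega_t]=6+t$, so that
$$\mathcal{T}([\omega_t])=\frac{(6+t)^2}{7-(1-t)^2}, \qquad t\in[0,1),$$
and a short computation shows this increases monotonically from $6$ to $7$; in particular $\mathcal{T}([\omega_t])<7<\tfrac32 c_1^2-\tfrac14=8.75$ throughout. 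Hence Theorem~\ref{gaudete} supplies an extremal K\"ahler metric $g_t$ representing $[\omega_t]$ for every $t\in[0,1)$. Since $\mathcal{B}<\tfrac14$ by Lemmas~\ref{up1} and~\ref{up2}, the controlled-cone bound~(\ref{control}) holds uniformly along the path (indeed $\mathcal{A}=\mathcal{T}+\mathcal{B}<7.25<9$), and Lemmas~\ref{pos2} and~\ref{pos3} give positive, uniformly controlled scalar curvature. The volumes $[\omega_t]^2\to 7>0$ stay bounded away from zero and infinity, so after harmless rescaling the Chen--Weber weak compactness theorem \cite{chenweb} lets me extract, for any $t_j\nearrow 1$, a subsequence along which $(M',g_{t_j})$ converges in the Gromov--Hausdorff sense to an extremal K\"ahler orbifold $(N,g_\infty)$, with K\"ahler classes converging to $\Omega$.

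It then remains to identify $N$. Because $\Omega$ sits on the boundary of the K\"ahler cone of $M'$, it is not a K\"ahler class there, so $(N,g_\infty)$ cannot be a smooth K\"ahler metric on $M'$ (the persistent holomorphic $(-1)$-curve $E_3$ would have zero area); bubbling must therefore occur. Running the bubbling analysis of the preceding argument verbatim, now with the line $[\omega_j]=u_j c_1+(1-u_j)\Omega$, $u_j=1-t_j\searrow 0$, in place of the earlier one, any collapsing holomorphic sphere $S$ with $S\cdot S=-k$ must satisfy $\Omega\cdot[S]=0$ together with $c_1\cdot[S]>0$, forcing $k=1$; and among the $(-1)$-curves of $M'$ the only $\Omega$-null class is $[S]=E_3$. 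So exactly the curve $E_3$ collapses, through the unique $k=1$ (Burns) ALE bubble, whose asymptotic cone is flat $\RR^4$. Collapsing this bubble is the smooth blow-down of $E_3$, so $N$ carries no orbifold point and is diffeomorphic to $\CP_2\# 2\overline{\CP}_2$, with limiting K\"ahler class the blow-down of $\Omega$, namely the anti-canonical class. The resulting smooth extremal K\"ahler metric $g_\infty$ is exactly what the proposition asserts.

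I expect the crux to be this last step: confirming that \emph{precisely} the single curve $E_3$ collapses and that its collapse contributes no orbifold point, so that the Gromov--Hausdorff limit is genuinely the smooth manifold $\CP_2\# 2\overline{\CP}_2$ carrying a smooth extremal metric, rather than an orbifold or a further degeneration. The positivity--orthogonality bookkeeping for the five non-collapsing $(-1)$-curves, combined with the $k=1$ identification of the bubble as the Burns metric (ALE asymptotic to flat $\RR^4$, hence a smooth blow-down), is what rules these alternatives out.
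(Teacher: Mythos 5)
Your proposal is correct and follows essentially the same route as the paper: deform from the anti-canonical class of $\CP_2\# 3\overline{\CP}_2$ along a segment toward the pulled-back anti-canonical class of $\CP_2\# 2\overline{\CP}_2$ on the boundary of the K\"ahler cone, apply Theorem~\ref{gaudete} together with the Chen--Weber compactness theorem, and identify the unique deepest bubble as the Burns metric collapsing a single $(-1)$-curve, so that the Gromov--Hausdorff limit is a smooth extremal K\"ahler metric on $\CP_2\# 2\overline{\CP}_2$ in the anti-canonical class. The paper merely carries this out for an arbitrary target class $\Omega$ with ${\mathcal T}(\Omega)<8.75$ (yielding Proposition~\ref{dolce} as a bonus) and spends somewhat more care on the toric/surface-classification bookkeeping that pins down the limit complex structure and K\"ahler class, but the argument is the same.
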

\begin{proof}
We begin by once 
 again  recalling   \cite{s,ty}  that $c_1$ is 
represented on
$\CP_2 \# 3 \overline{\CP}_2$ by  a K\"ahler-Einstein metric; thus, 
 we can  safely apply the above arguments to $\CP_2 \# 3 \overline{\CP}_2$
 without assuming anything about $\CP_2 \# 2 \overline{\CP}_2$.
 We now identify $\CP_2 \# 2 \overline{\CP}_2$ with the blow-down of
 of $\CP_2 \# 3 \overline{\CP}_2$ along the exceptional divisor $E_1$,
 and let 
 $${\mathbf p} : \CP_2 \# 3 \overline{\CP}_2 \to \CP_2 \# 2 \overline{\CP}_2$$
 denote the blowing-down map. If $\Omega$ is any K\"ahler class on $ \CP_2 \# 2 \overline{\CP}_2$,
 then
 $$[\omega_t] = (1-t) c_1 + t ({\mathbf p}^* \Omega)$$
 is  a K\"ahler class on $M=\CP_2 \# 3 \overline{\CP}_2$ for any
 $t\in [0,1)$, and this K\"ahler class satisfies ${\mathcal T} ([\omega_t ])\leq {\mathcal T} (\Omega )$
 for any $t$, since the Poincar\'e dual of  $c_1 (\CP_2 \# 2 \overline{\CP}_2)$ 
 is exactly the push-forward,  via $\mathbf p$, of the 
 Poincar\'e dual of $c_1  (\CP_2 \# 3 \overline{\CP}_2)$.  
We now assume henceforth  that  ${\mathcal T} (\Omega)< 8.75$;
 this holds in particular, 
  if we  take $\Omega = c_1 (\CP_2 \# 2 \overline{\CP}_2)$,
 but we shall also allow for other possibilities,  as doing so will cost us no additional effort.
 
 Since ${\mathcal T} ([\omega_t ])\leq {\mathcal T} (\Omega ) < 8.75$, there is, 
 by  Theorem \ref{gaudete}, 
an  extremal K\"ahler metric $g_t$ on $M$  with K\"ahler form $\omega_t\in [\omega_t]$
for all 
$t\in [0,1)$. Moreover,  
 $[\omega_t]$ satisfies (\ref{control}) for all $t\in [0,1)$, with  
 $\epsilon = 8.75 - {\mathcal T} (\Omega )$. 
Because the  $g_t$ also have positive, uniformly bounded 
scalar curvatures by Lemma \ref{pos3},   these metrics therefore have uniformly bounded
 Sobolev constants, and the Chen-Weber theorem therefore guarantees 
 the existence of a  Gromov-Hausdorff limit  of some sequence $g_{t_j}$, 
 $t_j\nearrow 1$, with  limit  a compact 
extremal K\"ahler orbifold $(N,J_\infty , g_\infty )$. On the other hand, the sectional curvatures
of the $g_t$ are 
are certainly  {\em not} uniformly
bounded as $t\to 1$, as the presence of 
a totally geodesic $2$-sphere of area $\alpha = 1-t\searrow 0$, 
forces    $\sup K\nearrow +\infty$  by the classical Gauss-Bonnet theorem. 
Thus,   
a non-trivial deepest bubble must  arise.
On the other hand, our symplectic argument to
rule out the bubbles containing a spherical class $A$ with $A^2 = -k$
still works for $k\geq 2$; the only difference  is that the limit class
${\mathbf p}^*\Omega\in H^2(M, \RR)$  no longer belongs to the K\"ahler cone, 
but rather sits on its boundary. We thus  
 have still  excluded any 
deepest bubble except one  whose
homology is carried by    $(-1)$-curves. Moreover, $E_1$
 is the only 
homological $(-1)$-curve 
whose symplectic area tends to zero as $t\to 1$.
Since $E_1$ has 
 non-zero self-intersection, it cannot be simultaneously
arise from two disjoint bubbles,  or  from 
 a bubble on a bubble.  Thus, the limit
orbifold must obtained from $M=\CP_2 \# 3 \overline{\CP}_2$ by collapsing
a single $2$-sphere representing $E_1$. Since the link of such a
$2$-sphere is simply connected, it follows that the $N$ is a manifold, with $b_2=3$; 
similarly,  the bubble has $b_2=1$, and  is asymptotically Euclidean, rather than merely
being ALE. Consequently, the bubble that forms must be  the Burns metric \cite{lpa} on 
${\mathcal O}(-1)$ line bundle
over $\CP_1$,
since \cite{burns,poonthesis}, up to homothety,   
this is the only asymptotically Euclidean scalar-flat K\"ahler surface
with $b_2=1$. 

Because the Burns metric has isometry
group $U(2)$, 
the toric structure of the bubble is therefore  uniquely determined up to conjugation, with 
points of non-trival isotropy 
given by  the zero section and two fibers of the line bundle. 
By contrast, the set of points of  $M=\CP_2 \# 3 \overline{\CP}_2$
at which the torus action has non-trivial isotropy consists of the six $(-1)$-curves 
 \begin{center}
\begin{picture}(100,80)(0,3)
\put(-7,70){\line(1,0){54}}
\put(40,75){\line(2,-3){28}}
\put(0,75){\line(-2,-3){28}}
\put(-7,5){\line(1,0){54}}
\put(40,0){\line(2,3){28}}
\put(0,0){\line(-2,3){28}}
\put(20,-2){\makebox(0,0){$E_1$}}
\put(20,77){\makebox(0,0){$E_1^\prime$}}
\put(-26,56){\makebox(0,0){$E_2$}}
\put(64,56){\makebox(0,0){$E_3$}}
\put(-26,18){\makebox(0,0){$E_3^\prime$}}
\put(64,18){\makebox(0,0){$E_2^\prime$}}
\end{picture}
\end{center}
where $ E_j^\prime -E_j = L- E_1 - E_2 - E_3$. 
Since the bubble is obtained by rescaling a tubular neighborhood of 
some $2$-sphere representing  $E_1$, and  because 
the Killing fields generating its toric structure are limits of the Killing fields
on $\CP_2 \# 3 \overline{\CP}_2$,  the rescaled region must in fact contain 
 the exceptional divisor $E_1$, as this   curve is one connected component of the 
 zero locus of an appropriate   Killing field. Since the Riemannian diameter of the
  region of curvature concentration 
 tends  to zero  \cite{chenweb}, and since radial geodesics in the Burns metric are length minimizing, even at large radii, the region of curvature concentration can only contain a disk of small intrinsic
 diameter in the curves  $E_2^\prime$ and $E_3^\prime$; thus, the region of
 curvature concentration meets of the locus of exceptional isotropy only
 in $E_1$ and in small adjoining disks in $E_2^\prime$ and $E_3^\prime$. 
 In particular,   $E_2$, $E_3$, and $E_1^\prime$ 
  are  contained in the region of smooth convergence
as $t_j \nearrow 1$. 
These   totally geodesic  submanifolds therefore give rise 
to  totally geodesic  submanifolds in the Gromov-Hausdorff limit. These limit 
submanifolds
are moreover holomorphic curves, since the  original complex structures
converge smoothly to the limit complex structure $J_\infty $ in the region in question; 
and  these three
limit curves  all  have self-intersection $-1$, since
 a tubular neighborhood of each 
original curve 
survives diffeomorphically in the limit. Thus,  $(N,J_\infty)$ is a compact complex
surface which can be blown down, at the limit $E_2$ and $E_3$ 
curves,
to  a compact complex surface with $b_2=1$ containing 
a rational curve of self-intersection $+1$. Surface
classification  \cite[Proposition 4.3]{bpv} now tells us 
that this blow-down must be $\CP_2$. Hence  $(N,J_\infty )$ is actually the blow-up
of $\CP_2$ at two distinct points. Moreover, $(N,J_\infty , g_\infty )$ contains a chain of  three  
 $(-1)$-curves  whose homology classes generate $H_2(N, \RR)$, and the areas of these
curves  are  $\Omega (E_2)$, $\Omega (E_1^\prime)$,  
and  $\Omega (E_3)$.
The  limit extremal K\"ahler metric $g_\infty$ therefore has K\"ahler class 
$\Omega$. Specializing to the case where $\Omega$ is 
the anti-canonical class then proves the claim. 
\end{proof} 

Notice that the above argument actually proves more than what was
initially claimed. We therefore also have the following  result: 

\begin{prop} \label{dolce} 
Let $\Omega $ be any K\"ahler class on  $\CP_2 \# 2 \overline{\CP}_2$ 
for which $${\mathcal T} (\Omega ) < 8.75 = c_1^2 + 1.75~.$$
Then there is an extremal K\"ahler metric $g$ in $\Omega $, and 
 a one-parameter family $g_t$, $t\in [0,1)$ of extremal 
K\"ahler metrics on  $\CP_2 \# 3 \overline{\CP}_2$, with  
 $g_0$ is 
 K\"ahler-Einstein, and with  $g_{t_j} \to g$ in the Gromov-Hausdorff sense for some
$t_j\nearrow 1$. 
 \end{prop}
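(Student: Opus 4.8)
The plan is to recognize that the proof just completed for Proposition \ref{sesame} was, in fact, carried out for an \emph{arbitrary} K\"ahler class $\Omega$ on $\CP_2 \# 2 \overline{\CP}_2$ subject only to $\mathcal{T}(\Omega) < 8.75$: nothing about $\Omega$ was assumed beyond this bound until the closing sentence, where $\Omega = c_1$ was substituted purely to extract the stated special case. I would therefore re-run that argument verbatim for the given $\Omega$, suppressing the final specialization, and simply collect the conclusions produced along the way.

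Tracing the steps, I first form $[\omega_t] = (1-t)c_1 + t({\mathbf p}^*\Omega)$ on $M = \CP_2 \# 3 \overline{\CP}_2$; this is a K\"ahler class for $t \in [0,1)$ with $\mathcal{T}([\omega_t]) \le \mathcal{T}(\Omega) < 8.75$, so Theorem \ref{gaudete} yields an extremal K\"ahler metric $g_t$ in each $[\omega_t]$, and $g_0$ is K\"ahler-Einstein because $[\omega_0] = c_1$. The bound $\mathcal{T}(\Omega) < 8.75$ supplies a uniform margin $\epsilon = 8.75 - \mathcal{T}(\Omega)$ in (\ref{control}), and Lemma \ref{pos3} gives positive, uniformly bounded scalar curvature, so the Chen-Weber theorem extracts a Gromov-Hausdorff limit $g_\infty$ of some subsequence $g_{t_j}$, $t_j \nearrow 1$. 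A Gauss-Bonnet argument then forces the sectional curvatures to blow up as $t \to 1$, so a deepest bubble forms; the symplectic exclusion reduces this bubble to a single collapse of a $2$-sphere representing $E_1$, identifies it as the Burns metric on $\mathcal{O}(-1) \to \CP_1$, and shows that the limit $(N, J_\infty)$ is the blow-up of $\CP_2$ at two points.

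The only places where the generality of $\Omega$ genuinely enters — and hence the crux of the matter — are the identification of $E_1$ as the \emph{unique} homological $(-1)$-curve whose $[\omega_t]$-area shrinks to zero, and the final reading-off of the limit K\"ahler class. For the former, the projection formula gives $[\omega_t] \cdot A \to ({\mathbf p}^*\Omega) \cdot A = \Omega \cdot {\mathbf p}_* A$, which vanishes exactly when $A = E_1$ (since ${\mathbf p}_* E_1 = 0$) and is strictly positive for every other $(-1)$-curve, as ${\mathbf p}_* A$ is then a nonzero effective class paired against the K\"ahler class $\Omega$; this holds for \emph{any} such $\Omega$, so the bubbling analysis survives intact. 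For the latter, the three surviving totally geodesic $(-1)$-curves $E_2$, $E_1^\prime$, $E_3$ acquire limit areas $\Omega(E_2)$, $\Omega(E_1^\prime)$, $\Omega(E_3)$, which pin the K\"ahler class of $g_\infty$ to be $\Omega$ itself. Setting $g = g_\infty$ then delivers the extremal K\"ahler metric in $\Omega$ together with the family $g_t$ on $\CP_2 \# 3 \overline{\CP}_2$ (with $g_0$ K\"ahler-Einstein and $g_{t_j} \to g$ in the Gromov-Hausdorff sense), completing the proof.
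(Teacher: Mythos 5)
Your proposal is correct and matches the paper's own treatment exactly: the paper disposes of Proposition~\ref{dolce} with the single remark that the proof of Proposition~\ref{sesame} was already carried out for an arbitrary $\Omega$ with ${\mathcal T}(\Omega)<8.75$, specializing to $\Omega=c_1$ only at the last line. Your tracing of where the generality of $\Omega$ enters (the $E_1$-uniqueness via $\Omega\cdot{\mathbf p}_*A$ and the reading-off of the limit class from the surviving curves) is a faithful, slightly more explicit rendering of the same argument.
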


\pagebreak 
 
\section{Einstein Metrics}

\label{bach}

Let  $(M,J)$  now be a Del Pezzo surface, and  again let ${\zap K} \subset H^2 (M, \RR)$
be its K\"ahler cone. 
  The following variational 
  principle \cite{chenlebweb, lebhem}   unlocks   
  the mysteries of conformally
 K\"ahler, Einstein metrics: 
    
 \begin{prop} \label{critical} 
Suppose that  $h$ is an Einstein metric on $M$ which is conformally related 
to a $J$-compatible K\"ahler metric $g$ with K\"ahler class $[\omega ]\in {\zap K}$. 
Then $[\omega ]$ is a critical point of ${\mathcal A}$. Moreover, $g$ is an 
{extremal}  K\"ahler metric, and  the scalar
curvature $s$ of $g$ is everywhere positive.

Conversely, if $\Omega\in {\zap K}$ is a critical point of $\mathcal A$, 
and if $\omega \in \Omega$ is the K\"ahler form of an extremal K\"ahler metric 
$g$ with scalar curvature $s > 0$, then  $h=s^{-2}g$ is an Einstein metric on $M$. 
 \end{prop}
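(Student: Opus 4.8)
The plan is to exploit the conformal relationship between $h$ and $g$ in dimension four, where the key fact is that a metric is Einstein precisely when its trace-free Ricci tensor vanishes, and to translate this condition into statements about $g$. First I would recall the standard conformal change formula relating the trace-free Ricci tensors (equivalently, the Bach-flat/Einstein conditions) of $h=f^{-2}g$ and $g$. The natural guess, motivated by the Page and Derdzi\'nski examples cited in the introduction, is that the conformal factor should be $f=s_g$, the scalar curvature of the K\"ahler metric; so I would set $h=s^{-2}g$ throughout (this requires $s>0$, which is part of what must be shown in the forward direction). For the \emph{converse} direction---which I expect to be the cleaner half---I would start from the hypotheses that $g$ is extremal with $s>0$ in a critical K\"ahler class. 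In four real dimensions, the complex structure lets one decompose the Weyl tensor and the trace-free Ricci tensor into self-dual and anti-self-dual parts; for a K\"ahler metric the self-dual Weyl curvature $W^+$ is determined algebraically by $s$, and one computes directly that $h=s^{-2}g$ has vanishing trace-free Ricci tensor exactly when $g$ satisfies a specific fourth-order equation. The extremal condition $\nabla^{1,0}s$ holomorphic is precisely what makes the relevant curvature terms conspire, and I would verify that this plus the criticality of $\Omega$ for $\mathcal A$ forces the Einstein equation for $h$.

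The \textbf{main obstacle} will be the forward direction, and specifically extracting criticality of $\mathcal A$ and extremality of $g$ from the mere existence of a conformally K\"ahler Einstein metric. My approach here would be variational: the functional $\mathcal A(g)=\frac{1}{32\pi^2}\int_M s_g^2\,d\mu_g$ agrees, for conformally K\"ahler metrics in four dimensions, with (a multiple of) the Weyl functional $\int |W^+|^2\,d\mu$, since $|W^+|^2$ is pointwise proportional to $s^2$ for a K\"ahler metric and $\int|W^+|^2 d\mu$ is conformally invariant. An Einstein metric is an absolute minimizer of the Weyl functional within its conformal class (indeed Einstein metrics are critical for the full Weyl functional among all metrics), so $h$ being Einstein and conformally K\"ahler should force the associated K\"ahler metric $g$ to be a critical point of $\mathcal A$ restricted to the K\"ahler metrics. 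I would make this precise by showing that first-variation of $\mathcal A$ among K\"ahler metrics splits into a variation within the fixed K\"ahler class (whose vanishing is Calabi's extremal condition, giving that $g$ is extremal) and a variation of the K\"ahler class itself (whose vanishing, using the sharp formula $\mathcal A(\Omega)=\mathcal T(\Omega)+\mathcal B(\Omega)$ quoted in the introduction, gives that $\Omega$ is critical for $\mathcal A$ on $\zap K$).

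Once extremality and criticality are in hand, positivity of $s$ in the forward direction should follow from the conformal factor being $s^{-2}$: since $h=s^{-2}g$ must be a genuine Riemannian (everywhere nondegenerate) metric, $s$ cannot vanish, and a connectedness plus continuity argument on the sign of $s$---combined with the fact that an extremal K\"ahler metric in a class with $c_1\cdot\Omega>0$ has positive average scalar curvature---rules out the negative sign. The remaining piece is to confirm that the critical-point and extremality conditions, taken together, are not only necessary but sufficient, which closes the loop with the converse and shows the two directions are genuinely inverse to one another; I expect this to reduce to checking that the two separate variational conditions exactly reconstitute the single fourth-order Einstein-via-conformal-change equation identified above.
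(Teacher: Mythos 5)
Your forward direction is essentially the paper's argument: an Einstein metric is Bach-flat, Bach-flatness is conformally invariant, so the K\"ahler representative $g$ is a critical point of the Weyl functional $\mathcal W$ among \emph{all} metrics, hence a fortiori of its restriction to K\"ahler metrics, which by $|W_+|^2=s^2/24$ and the signature formula equals $\frac1{12}\int s^2\,d\mu-12\pi^2\tau(M)$; splitting the variation into a within-class piece and a variation of the class then gives extremality of $g$ and criticality of $[\omega]$ for $\mathcal A$. (One aside: ``an Einstein metric minimizes the Weyl functional within its conformal class'' is vacuous in dimension four, since $\mathcal W$ is constant on conformal classes; the statement you actually need, and do also state, is that Einstein metrics are critical for $\mathcal W$ among all metrics, i.e.\ Bach-flat.) Your argument for $s>0$ silently assumes the conformal factor is $s^{-2}$; in the forward direction that identification is Derdzi\'nski's theorem, not a hypothesis, though the paper likewise defers this point to \cite{chenlebweb}.

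The genuine gap is in the converse. You propose to ``verify that [extremality] plus the criticality of $\Omega$ forces the Einstein equation for $h$,'' but criticality of $\Omega$ is the vanishing of the derivative of a function on the \emph{finite-dimensional} cone ${\zap K}$, while the Einstein equation for $s^{-2}g$ is a pointwise tensor equation; a critical point of a restricted functional need not be critical for the unrestricted Weyl functional, which is what you need. The paper's key mechanism, absent from your outline, is this: for an extremal K\"ahler metric the Bach tensor $B$ is $J$-invariant (because $\Hess(s)$ is), symmetric, trace-free and divergence-free, so $\psi=B(J\cdot,\cdot)$ is a \emph{harmonic anti-self-dual $2$-form}; hence $g_t=g+tB$ is a curve of K\"ahler metrics, along which the first variation of $\mathcal W$ is $-\int|B|^2d\mu$. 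Criticality of $\mathcal A$ at $\Omega$ together with extremality of $g$ makes $g$ critical for $\mathcal W$ restricted to K\"ahler metrics, so $B=0$; the explicit formula for the Bach tensor of an extremal K\"ahler surface then reduces $B=0$ to $\mathring{r}=-2s^{-1}\Hess_0(s)$, which by the conformal transformation law of the trace-free Ricci tensor says precisely that $s^{-2}g$ is Einstein. Without the observation that $B$ is itself tangent to the space of K\"ahler metrics, your ``two separate variational conditions'' do not reconstitute the full equation $B=0$, and the converse does not close.
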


Because of the crucial role this result plays in our discussion, 
we will now briefly sketch the proof, while referring the reader to
  \cite{chenlebweb} for more details.   
    
For any smooth compact oriented $4$-manifold $M$, one may  consider the 
conformally invariant Riemannian functional 
$$
{\mathcal W}(g)= \int_{M} |W|^{2}_gd\mu_{g}
=  -12\pi^2 \tau (M) + 2\int_M | W_+|^2 d\mu 
$$
where $W$ is the Weyl curvature, and $W_+$ is its self-dual part. 
Following standard convention,  we will call $\mathcal W$ the {\em Weyl functional}. 
The gradient of ${\mathcal W}$ on the space of RIemannian metrics \cite{bach,bes}
 is minus  the {\em Bach tensor} $B$, as defined  by
\begin{eqnarray*}
B_{ab}&=&(\nabla^c\nabla^d+\frac{1}{2}r^{cd})
W_{acbd}\\
&=& (2\nabla^c\nabla^d+r^{cd})(W_+)_{acbd}~.
\end{eqnarray*}
Because $\int|W|^2d\mu$ is invariant under diffeomorphisms and conformal rescalings, 
we have  
$${B_a}^a=0, ~~\nabla^aB_{ab}=0$$
for any $4$-dimensional Riemannian metric.  
The Bianchi identities imply that any Einstein metric 
satisfies the {\em Bach-flat} condition $B=0$. Since  the  Bach-flat condition is 
conformally invariant,   any conformally Einstein $4$-dimensional metric 
 is therefore   \cite{bes,pr2} Bach-flat, too.

We now specialize to the case of K\"ahler metrics. 
For any K\"ahler metric $g$ on a complex surface $(M,J)$, one 
has 
\begin{equation}
\label{sebastian}
|W_+|^2 = \frac{s^2}{24}
\end{equation}
with respect to the  orientation induced by $J$. (Of course, equation (\ref{sebastian}) 
is not conformally invariant --- but  neither is the K\"ahler condition!) 
In particular, any Bach-flat K\"ahler metric is a critical point of the Calabi functional 
$${\mathcal C} (g ) =  \int s^2 d\mu ~,$$
either as a functional on a fixed K\"ahler class $\Omega =[\omega ]$, or
on the entire space of K\"ahler metrics, with $\Omega$ allowed to vary. 
In particular, any Bach-flat K\"ahler metric $g$ must
be an extremal K\"ahler metric, and its K\"ahler class must be 
a critical point of ${\mathcal A}: {\zap K}\to \RR$.

Equation (\ref{sebastian}) reflects the deeper fact that  the self-dual Weyl tensor
of a K\"ahler surface is completely determined by the 
scalar curvature and the K\"ahler form. If 
$(M^4,g,J)$ is a K\"ahler manifold with K\"ahler form $\omega$,  then 
$${(W_+)_{ab}}^{cd}= \frac{s}{12}\left[
\omega_{ab}\omega^{cd} - \delta_{a}^{[c} \delta_{b}^{d]}+ {J_a}^{[c}{J_b}^{d]}
\right]
$$
and plugging this into the formula for the  Bach tensor yields 
\begin{equation}
\label{johann}
B_{ab}= \frac{s}{6}\mathring{r}_{ab} + \frac{1}{4}{J_a}^c{J_b}^d\nabla_c\nabla_d s
+\frac{1}{12} \nabla_a\nabla_b s + \frac{1}{12}g_{ab}\Delta s.
\end{equation}
However, the extremal K\"ahler metric condition is equivalent to 
requiring that the Hessian $\nabla\nabla s$ of the scalar curvature
by $J$-invariant. At an extremal K\"ahler metric, we can therefore 
define  an anti-self-dual $2$-form 
 $\psi$ by  
 $$
 \psi = B (J \cdot , \cdot) =  \frac{1}{6}\Big[ s\rho + 2  i\partial \bar{\partial}s
\Big]_0 
$$
where $\rho$ is the Ricci form, and where 
the subscript  ``$0$'' denotes projection into the primitive $(1,1)$-forms
 $\Lambda^{1,1}_0=\Lambda^-$. 
Since $B$ is symmetric and divergence-free, we thus have 
$$(\delta \psi)_b=-\nabla^a\psi_{ab} = \nabla^a \left( B_{ac}{J_b}^c\right) =  {J_b}^c\nabla^a B_{ac}  =0~,$$
so that  the anti-self-dual $2$-form $\psi$ is
co-closed, and 
 hence  {\em harmonic}.  In particular, the $1$-parameter family $g_t= g + tB$
then  consists entirely of K\"ahler metrics. The first variation ${\mathcal W}$
is $-\int |B|^2 d\mu$ for this family of K\"ahler metrics, and so can vanish only if 
$B=0$. Thus, if $\Omega$ is a K\"ahler class which is a critical point of 
$\mathcal A$, and it $g$ is an extremal K\"ahler metric which belongs to 
$\Omega$, then $g$ is necessarily   Bach-flat.

Now 
 the trace-free Ricci tensor $\mathring{r}$ transforms under conformal changes 
by 
	$$\hat{\mathring{r}}= \mathring{r} +2 u \Hess_0 (u^{-1})~,$$
 and equation (\ref{johann}) tells us that any Bach-flat 
K\"ahler metric satisfies 
$$ \mathring{r}  =  - 2 s^{-1} \Hess_0 (s).$$
Thus, any Bach-flat  K\"ahler metric with $s> 0$ has a conformal rescaling 
$\hat{g} = s^{-2} g$ which is Einstein. 
 Proposition \ref{critical} now follows.

Now let $M$  specifcally be the Del Pezzo surfaces  $\CP_2 \# 2 \overline{\CP}_2$, 
let ${\zap K}\subset H^2(M, \RR)$
be its  K\"ahler cone, and let $\check{\zap K}={\zap K}/\RR^+$,
where the positive reals act by scalar multiplication.  Since the function
${\mathcal T}(\Omega) = (c_1\cdot \Omega)^2/\Omega^2$ is homogeneous
of degree $0$, we now consider it  as a function on $\check{\zap K}$.
 For any $t\in \RR$, let ${\mathbf Y}_t\subset\check{\zap K}$
be the region defined by ${\mathcal T}(\Omega )\leq t$.

\begin{lem} \label{discus}
 If  $7 < t < 8$, then  ${\mathbf Y}_t$ is homeomorphic
to the closed unit $2$-disk,
and so, in particular,  
is  compact. 
\end{lem}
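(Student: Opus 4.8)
The plan is to make both $\check{\zap K}$ and the function ${\mathcal T}$ completely explicit, and then read off the topology of the sublevel set by a Morse-theoretic argument. First I would fix the basis $L,E_1,E_2$ of $H^2(M,\RR)$ in which the intersection form is $\mathrm{diag}(1,-1,-1)$ and $c_1$ is Poincar\'e dual to $(3,-1,-1)$, so that $c_1^2=7$. Since every K\"ahler class $\Omega$ pairs positively with the effective classes $E_1$, $E_2$, $L-E_1-E_2$, hence with $L$, each ray of ${\zap K}$ meets the affine slice $\{\Omega\cdot L=1\}$ exactly once; writing $\Omega=L-b_1E_1-b_2E_2$ there identifies $\check{\zap K}$ with the open triangle $\{b_1>0,\ b_2>0,\ b_1+b_2<1\}\subset\RR^2$. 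In these coordinates
$${\mathcal T}(b_1,b_2)=\frac{(3-b_1-b_2)^2}{1-b_1^2-b_2^2},$$
and the anti-canonical ray $[c_1]$ sits at the interior point $(\tfrac13,\tfrac13)$, where ${\mathcal T}=7$.

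Next I would show that this interior point is the \emph{unique} critical point and a global minimum. Both $c_1$ and any K\"ahler $\Omega$ lie in the same component of the positive cone of the signature-$(1,2)$ form (since $c_1^2=7>0$, $\Omega^2>0$, and $c_1\cdot\Omega>0$), so the reverse Cauchy--Schwarz inequality gives ${\mathcal T}(\Omega)\geq c_1^2=7$, with equality exactly when $\Omega\parallel c_1$. For the critical points, writing ${\mathcal T}=N^2/D$ with $N=3-b_1-b_2>0$ and $D=1-b_1^2-b_2^2>0$ on the triangle, the equations $\partial_i{\mathcal T}=0$ reduce, using $N>0$, to $Nb_i=D$ for $i=1,2$; hence $b_1=b_2=:b$ and then $b(3-2b)=1-2b^2$, forcing $b=\tfrac13$. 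Thus $(\tfrac13,\tfrac13)$ is the only critical point of ${\mathcal T}$ on all of $\check{\zap K}$, and by the lower bound it is the global minimum. (Nondegeneracy of this minimum is a one-line Hessian check, but is not strictly needed below.)

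The crucial step is to control ${\mathcal T}$ along the boundary of the triangle, which is exactly where the hypothesis $t<8$ enters. On the edge $b_1=0$ one gets ${\mathcal T}=(3-b_2)^2/(1-b_2^2)$; on $b_1+b_2=1$ one gets ${\mathcal T}=2/(b_1b_2)$; and symmetrically for $b_2=0$. A direct minimization shows each of these is $\geq 8$, attained at $b_2=\tfrac13$, at $(\tfrac12,\tfrac12)$, and at $b_1=\tfrac13$ respectively---this is just the reverse Cauchy--Schwarz bound ${\mathcal T}\geq 8$ for the three blow-downs of $M$ along its $(-1)$-curves, each of which has $c_1^2=8$. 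Moreover $D\to 0$ while $N\to 2$ forces ${\mathcal T}\to+\infty$ at the corners $(1,0)$ and $(0,1)$, and ${\mathcal T}\to 9$ at $(0,0)$. Hence $\liminf{\mathcal T}\geq 8$ along the entire topological boundary, so for $t<8$ the set ${\mathbf Y}_t=\{{\mathcal T}\leq t\}$ is a closed subset of the open triangle that is bounded away from its walls, and is therefore compact.

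Finally I would assemble these facts. For $7<t<8$ there are no critical values in $(7,t]$, so $t$ is a regular value: $\partial{\mathbf Y}_t=\{{\mathcal T}=t\}$ is a compact smooth $1$-manifold and ${\mathbf Y}_t$ is a compact smooth surface-with-boundary embedded in $\RR^2$. The downward gradient flow of ${\mathcal T}$ is defined for all time inside the forward-invariant compact set ${\mathbf Y}_t$, and since $[c_1]$ is the unique critical point and the global minimum, every trajectory converges to it; this deformation-retracts ${\mathbf Y}_t$ onto the point $[c_1]$, so ${\mathbf Y}_t$ is connected and contractible. A compact, connected, simply connected planar surface-with-boundary has Euler characteristic $1$ and a single boundary circle, hence is homeomorphic to the closed $2$-disk, as claimed. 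The only genuinely delicate point is the boundary estimate ${\mathcal T}\geq 8$ on $\partial\check{\zap K}$: it is what forces the restriction $t<8$ and guarantees that the sublevel set never reaches the walls of the K\"ahler cone, so that the clean Morse picture survives.
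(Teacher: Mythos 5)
Your proof is correct, and it shares its essential geometric input with the paper's: both arguments rest on the explicit description of the K\"ahler cone of $\CP_2\#2\overline{\CP}_2$ as the classes positive on the three $(-1)$-curves, and both control the boundary by passing to the blow-downs (each with $c_1^2=8$) and invoking the reverse Cauchy--Schwarz inequality in the resulting two-dimensional Lorentzian space, which is exactly where the threshold $8$ comes from. Where you diverge is in the final topological identification. The paper slices the cone by the affine hyperplane $c_1\cdot\Omega=7$ rather than your $\Omega\cdot L=1$: writing $\Omega=c_1+\eta$ with $\eta\in c_1^{\perp}$, the condition $(c_1\cdot\Omega)^2\le t\,\Omega^2$ becomes literally $|\eta^2|\le 7(t-7)/t$, so ${\mathbf Y}_t$ is an honest round closed ball in the negative-definite plane $c_1^{\perp}$ and no further argument is needed. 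Your route --- locating the unique critical point of ${\mathcal T}=N^2/D$, checking that $t$ is a regular value, retracting along the downward gradient flow, and then appealing to the classification of compact planar surfaces with boundary --- is heavier but more robust: it would apply verbatim to sublevel sets of functions, such as ${\mathcal A}={\mathcal T}+{\mathcal B}$ itself, whose level sets are not round in any linear slice. For this particular lemma, the paper's choice of slice makes the sublevel set a metric ball and eliminates all of the Morse theory.
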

\begin{proof}
On the manifold  $M=\CP_2 \#  \overline{\CP}_2$ under discussion, 
an element of $H^2(M, \RR)$ 
 is  determined 
by the numbers 
\begin{center}
\begin{picture}(100,80)(0,3)
\put(-7,70){\line(1,0){54}}
\put(40,75){\line(2,-3){28}}
\put(0,75){\line(-2,-3){28}}
\put(68,44){\line(-1,-1){52}}
\put(-28,44){\line(1,-1){52}}
\put(20,77){\makebox(0,0){$\delta$}}
\put(-21,56){\makebox(0,0){$\beta$}}
\put(62,56){\makebox(0,0){$\gamma$}}
\put(-21,13){\makebox(0,0){$\gamma+\delta$}}
\put(60,13){\makebox(0,0){$\beta+\delta$}}
\end{picture}
\end{center}
it assigns to the 
three exceptional curves portrayed as representing  the upper edges
of the pentagon. Since these three numbers represent areas, they all 
 must be positive. Conversely, any cohomology class 
for which these three numbers are positive is actually a K\"ahler class.  Indeed, we can either blow down $M$ along the top curve to obtain $\CP_1\times \CP_1$, or along
the two top diagonal curves to obtain $\CP_2$. 
Pulling back products metrics on $\CP_1\times \CP_1$  then allows one to specify any  desired value of $\beta$, and $\gamma$.  Pulling back a multiple of 
the Fubini-Study metric and adding this on
as well, we can then choose $\delta$ arbitrarily. Hence  a cohomology 
class $\leo \in H^2 (M, \RR)$  is a K\"ahler class iff
it assigns a positive value to each of these three exceptional curves. 
Of course, any cohomology class $\leo$ with this property must consequently 
also satisfy $\leo^2 > 0$ and $c_1\cdot \leo > 0$.

Now, suppose that the convex cone 
in $H^2(M,\RR)$ defined by  
\begin{eqnarray}
\leo^2 &>& 0 \nonumber\\
c_1(M)\cdot \leo &>& 0 \label{lion}\\
(c_1\cdot \leo )^2 &<& 8 \leo^2 \nonumber
\end{eqnarray}
contained a cohomology class that was not a K\"ahler class. By continuity, it would therefore
contain
 a   class $\leo$  which was  non-negative on all three exceptional curves, but 
 which vanished on at least one of them. 
But $\leo$ would then be the pull-back of  some cohomology class $\mho$ on a blow-down 
$N$, obtained by collapsing exactly one exceptional curve,  which satisfied
$\mho^2 > 0$ and 
$$(c_1(N)\cdot \mho )^2 < 8 \mho^2= c_1^2 (N) \mho^2~.$$
But this is a contradiction, because  $c_1(N)$ and  $\mho \in H^2 (N,\RR)$
would then be a pair of time-like vectors in a $2$-dimensional Minkowski space which violated the 
reverse Cauchy-Schwartz inequality for the Lorentzian inner product. Hence
the open convex cone  defined by (\ref{lion}) 
is actually a subset of the  K\"ahler cone ${\zap K}$. 
Consequently, for any $t\in (7 ,  8)$, the set of $\leo \in H^2 (M, \RR)$  with 
\begin{eqnarray}
\leo^2 &>& 0 \nonumber\\
c_1(M)\cdot \leo &>& 0 \label{cub}\\
(c_1\cdot \leo )^2 &\leq& t \leo^2 \nonumber
\end{eqnarray}
consists entirely of K\"ahler classes, and  its quotient  
by  $\RR^+$  therefore   exactly equals ${\mathbf Y}_t$. 
However, this quotient can be identified with
the intersection of  (\ref{cub}) with the hyperplane 
$c_1 \cdot \leo  =7$. 
Writing elements of
this hyperplane uniquely as
 $\leo = c_1 + \eta$,
where $c_1\cdot \eta = 0$, we thus have identified
${\mathbf Y}_t$ with the closed ball
$$|\eta^2| \leq  \frac{7(t-7)}{t}$$
in the $2$-dimensional space-like hyperplane $c_1^\perp\subset H^2(M, \RR)$.
This proves the claim.   \end{proof}

Theorem \ref{pace} is now an easy consequence. Indeed, because
Lemma \ref{up1} tells us that 
$0 \leq \mathcal B < \frac{1}{4}$ on the entire K\"ahler cone, 
the infimum of $\mathcal A$ for $M=\CP_2 \# 2 \overline{\CP}_2$ must be 
less than 
${\mathcal T}(c_1) + \frac{1}{4} = c_1^2+ \frac{1}{4}= 7 \frac{1}{4}$,
whereas ${\mathcal A} \geq 7 \frac{1}{4}$ outside the interior
of ${\mathbf Y}_{c_1^2+\frac{1}{4}}$. Since ${\mathbf Y}_{c_1^2+\frac{1}{4}}$ is compact by 
Lemma \ref{discus},  there is consequently  an
interior point $\check{\Omega}$ of  ${\mathbf Y}_{c_1^2+\frac{1}{4}}$ at which $\mathcal A$
achieves its minimum. Notice that  $\check{\Omega}$ is a critical point of $\mathcal A$, and let 
$\Omega$ be a K\"ahler class which projects to 
$\check{\Omega}$. By Theorem~\ref{laudate}, 
$\Omega$ is represented by an extremal K\"ahler metric $g$, and by Lemma \ref{pos2},
this extremal K\"ahler metric has positive scalar curvature $s > 0$. 
Proposition \ref{critical} then tells us that $h= s^{-2}g$ is
an Einstein metric on $M$, and, by construction, $h$ minimizes the Weyl
functional among all conformally K\"ahler metrics on $M$. 
We have thus succeeded in proving Theorem \ref{pace}. 
Theorem  \ref{quoniam}   now similarly  follows  from Proposition \ref{dolce},
since $7.25 < 8.75$.

\pagebreak 

\appendix
\makeatletter
\def\@seccntformat#1{Appendix\ \csname the#1\endcsname :\quad}
\makeatother

\section{ Computations for $\CP\# 2 \overline{\CP}_2$} \label{comp2} 

In this section, we will  use a combination of elementary  symplectic geometry 
and  computer-assisted  algebra 
to  estimate some key geometric invariants of 
extremal K\"ahler metrics on $M=\CP\# 2 \overline{\CP}_2$ that are needed 
in the body of the paper. We begin
by fixing a K\"ahler class, normalized by rescaling so that the proper
transform of the projective line between the two blow-up points has area $1$:

\begin{center}
\begin{picture}(240,80)(0,3)
\put(-7,70){\line(1,0){54}}
\put(40,75){\line(2,-3){28}}
\put(0,75){\line(-2,-3){28}}
\put(68,44){\line(-1,-1){52}}
\put(-28,44){\line(1,-1){52}}
\put(20,77){\makebox(0,0){$1$}}
\put(-21,56){\makebox(0,0){$\beta$}}
\put(62,56){\makebox(0,0){$\gamma$}}
\put(-21,13){\makebox(0,0){$\gamma+1$}}
\put(60,13){\makebox(0,0){$\beta+1$}}
\put(100,40){\vector(1,0){50}}
\put(210,0){\line(2,3){52}}
\put(220,0){\line(-2,3){52}}
\put(165,70){\line(1,0){100}}
\put(172.6,70.5){\circle*{4}}
\put(257.4,70.5){\circle*{4}}
\end{picture}
\end{center}
Take the two blow-up points to be $[1,0,0], [0,1,0] \in \CP_2$, 
and fix the  maximal torus 
$$\left[ \begin{array}{ccc}
e^{i\theta}&&\\&e^{i\phi}&\\&&1
\end{array}\right]$$
in the automophism group. Then, for any $T^2$-invariant metric, 
the moment map of the torus action will take values in a pentagon,
which after translation becomes the following: 
 \begin{center}
\begin{picture}(120,120)(0,0)
\put(0,10){\vector(1,0){125}}
\put(10,0){\vector(0,1){110}}
\put(130,10){\makebox(0,0){$x$}}
\put(10,115){\makebox(0,0){$y$}}
\put(50,0){\makebox(0,0){$\frac{\beta +1}{2\pi}$}}
\put(0,45){\makebox(0,0){$\frac{\gamma +1}{2\pi}$}}
\put(100,27){\makebox(0,0){$\frac{\gamma }{2\pi}$}}
\put(32,90){\makebox(0,0){$\frac{\beta }{2\pi}$}}
\put(10,80){\line(1,0){45}}
\put(55,80){\line(1,-1){35}}
\put(90,10){\line(0,1){35}}
\end{picture}
\end{center}
Let ${\mathfrak F}_1$ and ${\mathfrak F}_2$ be Futaki invariants of
this K\"ahler class with respect to the vector fields with Hamiltonians 
$-x$ and $-y$. Then \cite{ls} for any $T^2$-invariant metric, 
\begin{eqnarray*}
{\mathfrak F}_1&=& \int_M x(s-s_0) d\mu \\
&=& \frac{1}{V}\left[ 
(\beta -2\gamma) (\frac{1}{3} + \gamma + \gamma^2) + \gamma (\gamma - \beta) (2+ \beta + 2\gamma ) 
\right]\\
{\mathfrak F}_2&=& \int_M y(s-s_0) d\mu \\
&=& \frac{1}{V}\left[ 
(\gamma -2\beta) (\frac{1}{3} + \beta + \beta^2) + \beta (\beta - \gamma) (2+ \gamma + 2\beta ) 
\right]\\
\end{eqnarray*}
where 
$$
V= \beta \gamma + \beta + \gamma +  \frac{1}{2}. 
$$

Note that, by Archimedes' principle, 
 the push-forward of the volume measure of $M$ is exactly $4\pi^2$
times the Euclidean measure  on the moment polygon. 
Thus, for example, the average values  $x_0$ and $y_0$ of the Hamiltonians  
 $x$ and $y$ on $M$ are also  the $x$ and $y$ coordinates of
 the barycenter of the moment pentagon. This same observation also 
 makes it straightforward to compute
 the following useful constants:
\begin{eqnarray*}
A&:=& \int_M (x-x_0)^2d\mu \\
&=& \frac{1 +  6 (1 + \beta)[  \beta + \beta^2 + \beta^3+
    \gamma  (1 + 4 \beta + 4 \beta^2 + 2 \beta^3)+
    \gamma^2 (1 + \beta)^3]}{288\pi^2 V} \\
   B&:=&
    \int_M (y-y_0)^2d\mu\\
   &=&  \frac{1 +  6 (1 + \gamma)[  \gamma + \gamma^2 + \gamma^3+
    \beta  (1 + 4 \gamma + 4 \gamma^2 + 2 \gamma^3)+
    \beta^2 (1 + \gamma)^3] }{288\pi^2 V}
  \\  C &:=&  \int_M (x-x_0)(y-y_0)d\mu\\
 &=& -\frac{1 + 6 (1 + \beta) (1+\gamma) (\beta+ \gamma  + 3 \beta   \gamma)}{576\pi^2 V}
\end{eqnarray*}
If our metric is  extremal, we then have 
\begin{equation}
\label{scal2}
s-s_0= a(x-x_0) + b (y-y_0)
\end{equation}
where the constants $a$ and $b$ are  given by 
\begin{eqnarray*}
a &=& \frac{B{\mathfrak F}_1- C{\mathfrak F}_2}{AB-C^2}\\
b&=&\frac{ A{\mathfrak F}_2-C{\mathfrak F}_1}{AB-C^2}~.
\end{eqnarray*}
Consequently, 
$$
\int_M (s-s_0)^2 d\mu = \frac{B{\mathfrak F}_1^2 - 2C{\mathfrak F}_1{\mathfrak F}_2+A{\mathfrak F}_2^2 }{AB-C^2}
$$
for any extremal K\"ahler metric, 
 and even without assuming the existence of an extremal K\"ahler metric our arguments
therefore  assign a prominent role to the quantity
\begin{eqnarray*}
{\mathcal B}(\Omega)  &=& \frac{1}{32\pi^2}\frac{B{\mathfrak F}_1^2 - 2C{\mathfrak F}_1{\mathfrak F}_+A{\mathfrak F}_2^2 }{AB-C^2}\\
&=& 
8 \Big[ \gamma^2 (1 + 4 \gamma + 6 \gamma^2 + 4 \gamma^3) + \\&&
     \beta \gamma (-1 + 3 \gamma + 18 \gamma^2 + 26 \gamma^3 + 16 \gamma^4) + \\&&
     2 \beta^5 (2 + 8 \gamma + 21 \gamma^2 + 33 \gamma^3 + 27 \gamma^4 + 9 \gamma^5) + \\&&
     \beta^2 (1 + 3 \gamma + 27 \gamma^2 + 79 \gamma^3 + 89 \gamma^4 + 42 \gamma^5) + \\&&
     \beta^4 (6 + 26 \gamma + 89 \gamma^2 + 168 \gamma^3 + 150 \gamma^4 + 54 \gamma^5) + \\&&
     \beta^3 (4 + 18 \gamma + 79 \gamma^2 + 173 \gamma^3 + 168 \gamma^4 +     66 \gamma^5)\Big] \Big/ \\&&
        \Big[ 48 \beta^6 (1 + \gamma)^6 + 
   48 \beta^5 (1 + \gamma)^3 (3 + 12 \gamma + 14 \gamma^2 + 6 \gamma^3) + \\&&(1 + 2 \gamma)^2 (1 + 
      8 \gamma + 20 \gamma^2 + 24 \gamma^3 + 12 \gamma^4) + \\&&
   4 \beta^4 (1 + \gamma)^2 (47 + 282 \gamma + 573 \gamma^2 + 504 \gamma^3 + 180 \gamma^4) + \\&&
   4 \beta (3 + 33 \gamma + 140 \gamma^2 + 306 \gamma^3 + 376 \gamma^4 + 252 \gamma^5 + 72 \gamma^6) + \\&&
   8 \beta^2 (7 + 70 \gamma + 270 \gamma^2 + 535 \gamma^3 + 592 \gamma^4 + 354 \gamma^5 + 
      90 \gamma^6) + \\&&
   8 \beta^3 (17 + 153 \gamma + 535 \gamma^2 + 963 \gamma^3 + 966 \gamma^4 + 522 \gamma^5 + 
      120 \gamma^6)\Big] 
\end{eqnarray*}

\begin{lem} \label{up1} 
One has ${\mathcal  B} < \frac{1}{4}$ throughout the Kahler cone of $\CP\# 2 \overline{\CP}_2$. \end{lem}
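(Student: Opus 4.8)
The plan is to turn the rational inequality $\mathcal{B} < \tfrac{1}{4}$ into a polynomial positivity statement on the open positive quadrant and then to settle it by displaying manifestly positive coefficients. Since $\mathcal{B}$ is invariant under rescaling the K\"ahler class, it descends to the projectivized cone $\check{\zap K}$, which in the normalization fixed above is coordinatized by $(\beta,\gamma)$. Moreover, exactly as in the proof of Lemma~\ref{discus}, a class is K\"ahler precisely when it assigns positive area to each of the three upper exceptional curves---here the numbers $1$, $\beta$, $\gamma$---while the remaining areas $\beta+1$ and $\gamma+1$ are then automatically positive. Hence $\check{\zap K}$ is exactly the quadrant $\{\beta>0,\ \gamma>0\}$, and it suffices to prove $\mathcal{B}(\beta,\gamma) < \tfrac{1}{4}$ there.

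Write $\mathcal{B} = 8N/D$, where $N$ and $D$ are the numerator and denominator polynomials displayed above. Each grouped summand making up $D$---for instance $48\beta^6(1+\gamma)^6$ or $8\beta^3(17+153\gamma+\cdots)$---expands into monomials with non-negative coefficients, so $D(\beta,\gamma)>0$ on the entire quadrant and cross-multiplication is legitimate. The claim is therefore equivalent to the polynomial inequality
\[
P(\beta,\gamma) \;:=\; D(\beta,\gamma) - 32\,N(\beta,\gamma) \;>\; 0 \qquad\text{for all } \beta,\gamma > 0.
\]

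To prove this I would expand $P$ completely as a polynomial in $\beta$ and $\gamma$---a computation carried out with the aid of \emph{Mathematica}, in keeping with the other appendix calculations---and verify that every one of its coefficients is strictly positive. As $\beta,\gamma$ range only over positive values, such a decomposition of $P$ into a sum of positive monomials yields $P>0$ at once. Several low-order coefficients can be checked by hand as a sanity test: the constant term of $P$ is $1$, its linear part is $12\beta+12\gamma$, and its quadratic part is $24\beta^2+164\beta\gamma+24\gamma^2$, all positive.

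The one place the argument could fail is the positivity of the coefficients of $P$: there is no formal guarantee that subtracting $32N$ from $D$ leaves no negative coefficient, and $N$ itself does carry negative entries (such as the $-\beta\gamma$ term). Should a stray coefficient come out negative, the natural fallback is to exploit the $\beta\leftrightarrow\gamma$ symmetry coming from interchanging the two blown-up points---passing to the symmetric coordinates $\sigma=\beta+\gamma$, $p=\beta\gamma$, or simply pairing each offending monomial against a dominating positive one of the same bidegree. That some such regrouping must work is plausible on qualitative grounds: $D$ has degree $12$ while $32N$ has degree only $10$, so $\mathcal{B}\to 0$ as $\beta,\gamma\to\infty$, confining any maximum of $\mathcal{B}$ to a bounded region. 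Still, I expect the direct positive-coefficient verification to succeed without this refinement.
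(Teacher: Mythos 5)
Your setup is sound and agrees with the paper's: after the normalization $\delta=1$ the projectivized K\"ahler cone is the quadrant $\beta,\gamma>0$, the denominator $D$ is manifestly positive there, and the claim reduces to $P:=D-32N>0$. Your low-order sanity checks (constant term $1$, linear part $12\beta+12\gamma$, quadratic part $24\beta^2+164\beta\gamma+24\gamma^2$) are all correct. However, the step you flag as the one place the argument could fail is in fact exactly where it fails: $P$ does \emph{not} have all positive coefficients. The coefficient of $\beta^4$ in $D$ is $4\cdot 47=188$ (from the term $4\beta^4(1+\gamma)^2(47+\cdots)$), while $32N$ contributes $32\cdot 6=192$ (from $\beta^4(6+\cdots)$), so the coefficient of $\beta^4$ in $P$ is $-4$; by symmetry the coefficient of $\gamma^4$ is also $-4$. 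Your stated fallback---pairing each offending monomial against a dominating positive monomial \emph{of the same bidegree}---cannot repair this, because $-4$ is the entire coefficient in bidegree $(4,0)$; there is nothing left in that bidegree to pair against.

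What saves the argument is a cross-degree absorption, which is precisely the paper's proof: writing $P$ in grouped form, every group is term-by-term bounded below by the corresponding term of
$$4\left(\beta^2-\beta^4+\beta^6\right)+4\left(\gamma^2-\gamma^4+\gamma^6\right),$$
using, e.g., $24\beta^2\ge 4\beta^2$, $48\beta^6(1+\gamma)^6\ge 4\beta^6$, and $4\beta^4(-1+168\gamma+\cdots)\ge -4\beta^4$, all other groups being nonnegative. Since $x^2-x^4+x^6=x^2(1-x^2+x^4)\ge 0$ for all real $x$ (and the constant term $1$ of $P$ supplies strict positivity), one gets $P>0$. So your proposal needs this one additional idea---absorbing the $-4\beta^4$ and $-4\gamma^4$ terms against portions of the $\beta^2,\beta^6$ and $\gamma^2,\gamma^6$ coefficients via the positivity of $1-x^2+x^4$---before the verification goes through; as written, the direct positive-coefficient check you propose would come back negative.
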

\begin{proof}
Subtracting $4$ times the numerator of the above expression from the denominator 
yields 
\begin{eqnarray*}
&&1 + 12 \gamma + 24 \gamma^2 + 8 \gamma^3 - 4 \gamma^4 + 16 \gamma^5 + 48 \gamma^6 + 
 48 \beta^6 (1 + \gamma)^6 + \\&&
 16 \beta^5 (1 + 31 \gamma + 93 \gamma^2 + 129 \gamma^3 + 108 \gamma^4 + 60 \gamma^5 + 18 \gamma^6) + \\&&
  4 \beta (3 + 41 \gamma + 116 \gamma^2 + 162 \gamma^3 + 168 \gamma^4 + 124 \gamma^5 + 72 \gamma^6) + \\&&
 8 \beta^2 (3 + 58 \gamma + 162 \gamma^2 + 219 \gamma^3 + 236 \gamma^4 + 186 \gamma^5 + 90 \gamma^6) + \\&&
 8 \beta^3 (1 + 81 \gamma + 219 \gamma^2 + 271 \gamma^3 + 294 \gamma^4 + 258 \gamma^5 + 120 \gamma^6) + \\&&
 4 \beta^4 (-1 + 168 \gamma + 472 \gamma^2 + 588 \gamma^3 + 561 \gamma^4 + 432 \gamma^5 + 180 \gamma^6) .
 \end{eqnarray*}
 Term by term,  this is  greater than  
 $4(\gamma^2-\gamma^4+\gamma^6+\beta^2-\beta^4+\beta^6)>0$. Thus 
the denominator of our expression for 
$\mathcal B$ is more than four times larger than the corresponding numerator.
Hence ${\mathcal B} < \frac{1}{4}$, as claimed. 
\end{proof}

The coefficient $a$  of equation (\ref{scal2}) is explicitly given by

\bigskip

 \noindent 
 $\displaystyle 
 -192  \pi^2 \gamma\Big[1 + 4 \gamma + 6 \gamma^2 + 4 \gamma^3 + 6 \beta^3 (1 + \gamma)^3 + 
2 \beta^2 (6 + 18 \gamma + 17 \gamma^2 + 6 \gamma^3) + 
     \beta (7 + 21 \gamma + 22 \gamma^2 + 10 \gamma^3)\Big] \Big/
   \Big[1 + 10 \gamma + 36 \gamma^2 + 
   64 \gamma^3 + 60 \gamma^4 + 24 \gamma^5 + 
  24 \beta^5 (1 + \gamma)^5 + 
  12 \beta^4 (1 + \gamma)^2 (5 + 20 \gamma + 23 \gamma^2 + 10 \gamma^3) + 
  16 \beta^3 (4 + 28 \gamma + 72 \gamma^2 + 90 \gamma^3 + 57 \gamma^4 + 15 \gamma^5) + 
 12 \beta^2 (3 + 24 \gamma + 69 \gamma^2 + 96 \gamma^3 + 68 \gamma^4 + 20 \gamma^5) + 
 2 \beta (5 + 45 \gamma + 144 \gamma^2 + 224 \gamma^3 + 180 \gamma^4 + 60 \gamma^5)\Big] 
 $

\bigskip

 \noindent
   and $b$ is given by the analogous expression with $\beta$ and $\gamma$
   interchanged. In particular, both of these coefficients are always negative. 
 
 \begin{lem}\label{pos2}
If $g$ is an extremal K\"ahler metric on $M= \CP_2 \# 2 \overline{\CP}_2$, 
then the scalar curvature
$s$ of $g$ is positive at every point of  $M$.  Moreover, there is a 
smooth function  $f : {\zap K}\to \RR$   such that 
$s_{\max} = f (\Omega )$ for any extremal K\"ahler metric. 
\end{lem}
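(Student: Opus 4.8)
The plan is to reduce the statement to the explicit toric computations of this appendix, and then to use the affine character of the scalar curvature on the moment polytope. First I would invoke Calabi's theorem \cite{calabix2} on the symmetries of extremal Kähler metrics: the identity component of the isometry group of any extremal Kähler metric $g$ is a maximal compact connected subgroup of $\Aut_0(M,J)$. For $M=\CP_2\#2\overline{\CP}_2$ the group $\Aut_0(M,J)$ is non-reductive, but its maximal compact connected subgroup is the standard maximal $2$-torus $T^2$. Hence, after conjugating by an element of $\Aut_0(M,J)$, every extremal Kähler metric on $M$ is $T^2$-invariant, and the entire discussion of this appendix applies to it verbatim. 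In particular, writing the (rescaled) Kähler class $\Omega=[\omega]$ in the normalized form recorded above, with parameters $\beta,\gamma>0$, equation~(\ref{scal2}) gives
\[
s-s_0 = a(x-x_0)+b(y-y_0),
\]
where $a$ and $b$ are the explicit rational functions of $(\beta,\gamma)$ displayed above and $(x,y)$ is the moment map of the $T^2$-action.

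Next I would exploit the fact that $s$ is affine in $(x,y)$. By Archimedes' principle the moment map carries $M$ onto the convex moment pentagon $P\subset\RR^2$, and the formula above says that $s$ is the restriction to $P$ of a single affine function. An affine function on a convex polygon attains its extrema at vertices, so
\[
s_{\min}=\min_v s(v), \qquad s_{\max}=\max_v s(v),
\]
the extrema running over the five vertices $v$ of $P$, each of which is a fixed point of the torus action with coordinates that are explicit rational functions of $(\beta,\gamma)$. Thus $s(v)$ is itself an explicit rational function of $(\beta,\gamma)$ for each vertex $v$.

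For the positivity assertion I would evaluate $s$ at each vertex, substitute the formulas for $a$, $b$, $s_0$, $x_0$, $y_0$, and verify that every such value is positive for all $\beta,\gamma>0$. Since $a,b<0$ throughout the Kähler cone (as established just above), the minimum can only occur at one of the two ``upper-right'' vertices of $P$, so only those need be inspected; nonetheless this is an elaborate rational-function verification of precisely the kind carried out with computer assistance elsewhere in these appendices, and I expect it to be the main technical obstacle, since positivity must be confirmed uniformly over the whole open quadrant rather than at a single point. Once $s$ is known to be positive at the relevant vertices, its affineness forces $s>0$ throughout $P$, hence $s>0$ at every point of $M$.

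Finally, for the smooth function $f$, the decisive point is again that $a<0$ and $b<0$ hold throughout $\zap K$. Because $s=s_0+a(x-x_0)+b(y-y_0)$ is then strictly decreasing in each of $x$ and $y$, its maximum over $P$ is attained at the unique corner of $P$ that simultaneously minimizes both coordinates --- the corner at which the two coordinate-axis edges meet --- and this maximizing vertex is the \emph{same} for every value of $(\beta,\gamma)$. Consequently $s_{\max}$ is given by one closed-form rational expression in $(\beta,\gamma)$, multiplied by the overall scaling factor of $\Omega$, rather than by a vertex-dependent maximum of several functions. This closed form defines the desired smooth $f:{\zap K}\to\RR$ with $s_{\max}=f(\Omega)$; the uniform negativity of $a$ and $b$, which keeps the maximizing vertex from switching, is exactly what makes $f$ smooth rather than merely continuous.
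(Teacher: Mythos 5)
Your proposal follows essentially the same route as the paper's own proof: there, too, one reduces to $T^2$-invariant extremal metrics, uses $a,b<0$ to locate $s_{\max}$ at the origin corner of the moment pentagon (yielding a single rational expression for $f$, extended to all of ${\zap K}$ by homogeneity of degree $-1$), and establishes $s>0$ by exhibiting an explicit, term-by-term positive rational lower bound for $s_{\min}$ in $\beta$ and $\gamma$. The only differences are bookkeeping --- the paper draws its lower bound from a single comparison point dominating the pentagon coordinatewise rather than from the two far vertices you single out, and the vertex-positivity check you defer is precisely the computer-assisted computation displayed in the appendix.
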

\begin{proof}
Since $a$ and $b$ are negative, the values of $s_0+ a (x-x_0) + b(y-y_0)$
at $(0,0)$ and $(\frac{\beta}{2\pi}, \frac{\gamma}{2\pi})$ are certainly
upper and lower bounds for $s$.  Making the substitution
$$
s_0 = 4\pi \frac{c_1\cdot \Omega}{V}= 4\pi  \frac{3 + 2 \beta + 2 \gamma}{\frac{1}{2} + \beta + \gamma +
 \beta\gamma}
$$
into the value at $(\frac{\beta}{2\pi}, \frac{\gamma}{2\pi})$ 
gives us the positive lower bound 

\begin{eqnarray*}
s_{\min} \geq
&24\pi& \Big[ (1 + 2 \gamma) (1 + 2 \gamma + 2 \gamma^2)^2 + 8 \beta^5 (1 + \gamma)^4 +\\&&
     4 \beta^4 (5 + 24 \gamma + 40 \gamma^2 + 32 \gamma^3 + 13 \gamma^4 + 2 \gamma^5) + \\&&
     8 \beta^3 (3 + 14 \gamma + 25 \gamma^2 + 26 \gamma^3 + 16 \gamma^4 + 4 \gamma^5) + \\&&
     4 \beta^2 (4 + 16 \gamma + 33 \gamma^2 + 50 \gamma^3 + 40 \gamma^4 + 12 \gamma^5) + 
     \\&&
     2 \beta (3 + 12 \gamma + 32 \gamma^2 + 56 \gamma^3 + 48 \gamma^4 + 16 \gamma^5)\Big]\Big/
     \\&&
     \Big[ 1 + 
   10 \gamma + 36 \gamma^2 + 64 \gamma^3 + 60 \gamma^4 + 24 \gamma^5 +
   24 \beta^5 (1 + \gamma)^5 +  \\&&
   12 \beta^4 (1 + \gamma)^2 (5 + 20 \gamma + 23 \gamma^2 + 10 \gamma^3) + \\&&
   16 \beta^3 (4 + 28 \gamma + 72 \gamma^2 + 90 \gamma^3 + 57 \gamma^4 + 15 \gamma^5) +
    \\&&
   12 \beta^2 (3 + 24 \gamma + 69 \gamma^2 + 96 \gamma^3 + 68 \gamma^4 + 20 \gamma^5) +
    \\&&
   2 \beta (5 + 45 \gamma + 144 \gamma^2 + 224 \gamma^3 + 180 \gamma^4 + 60 \gamma^5)\Big]
\end{eqnarray*}
while making the same substitution into the value at 
 $(0,0)$  gives us a smooth function $f$ with 
 $f(\Omega)= s_{\max }$ for any extremal K\"ahler metric, and the  
 requirement that $f$ be homogeneous of degree $-1$ then specifies an appropriate  smooth
extension of $f$  to the entire K\"ahler cone. 
\end{proof}

 \pagebreak 

\section{ Computations for $\CP\# 3 \overline{\CP}_2$} \label{comp3}

We now carry out computations analogous to those in the previous appendix, but this time for 
$M=\CP\# 3 \overline{\CP}_2$. First recall  that the general K\"ahler class
on this manifold is determined by four real numbers:

 \begin{center}
\begin{picture}(240,80)(0,3)
\put(-7,70){\line(1,0){54}}
\put(40,75){\line(2,-3){28}}
\put(0,75){\line(-2,-3){28}}
\put(-7,5){\line(1,0){54}}
\put(40,0){\line(2,3){28}}
\put(0,0){\line(-2,3){28}}
\put(20,0){\makebox(0,0){$\alpha$}}
\put(20,77){\makebox(0,0){$\alpha+\delta$}}
\put(-21,56){\makebox(0,0){$\beta$}}
\put(62,56){\makebox(0,0){$\gamma$}}
\put(-29,18){\makebox(0,0){$\gamma+\delta$}}
\put(72,18){\makebox(0,0){$\beta+\delta$}}
\put(100,40){\vector(1,0){50}}
\put(210,0){\line(2,3){52}}
\put(220,0){\line(-2,3){52}}
\put(165,70){\line(1,0){100}}
\put(215,7.5){\circle*{3}}
\put(172.6,70.5){\circle*{3}}
\put(257.4,70.5){\circle*{3}}
\end{picture}
\end{center}
By applying a Cremona transformation, we may also assume that $\delta \geq 0$.
After rescaling, the region $\delta > 0$ can then be completely understood
in terms of those classes
for which $\delta =1$; these are exactly parameterized by the 
three arbitrary positive real numbers $\alpha$, $\beta$, and $\gamma$. 
Of course, any invariant geometrical conclusion we reach regarding 
 this region will automatically
also apply to ``mirror'' region reached by the Cremona transformation.
This will allow us to understand the entire K\"ahler cone ${\zap K}$, 
as long as we are careful to also  account for  
 the hyperplane $\delta=0$. 
 
 We now once again fix the $2$-torus in the automorphims group 
 corresponding to $[z_1:z_2:z_3]\mapsto [e^{i\theta}z_1:e^{i\phi}z_2:z_3]$.
 The image of $M$ under the moment map is then the hexagon 
 \begin{center}
\begin{picture}(120,120)(0,0)
\put(0,10){\vector(1,0){125}}
\put(10,0){\vector(0,1){110}}
\put(130,10){\makebox(0,0){$x$}}
\put(10,115){\makebox(0,0){$y$}}
\put(22,0){\makebox(0,0){$\frac{\alpha}{2\pi}$}}
\put(0,22){\makebox(0,0){$\frac{\alpha}{2\pi}$}}
\put(62,0){\makebox(0,0){$\frac{\beta +1}{2\pi}$}}
\put(0,57){\makebox(0,0){$\frac{\gamma +1}{2\pi}$}}
\put(100,27){\makebox(0,0){$\frac{\gamma }{2\pi}$}}
\put(32,90){\makebox(0,0){$\frac{\beta }{2\pi}$}}
\put(10,35){\line(1,-1){25}}
\put(10,80){\line(1,0){45}}
\put(55,80){\line(1,-1){35}}
\put(90,10){\line(0,1){35}}
\end{picture}
\end{center}
and our formulas \cite{ls} for the components of the Futaki invariant become 
\begin{eqnarray*}
{\mathfrak F}_1&=& \int_M x(s-s_0) d\mu \\
&=& \frac{1}{V}\left[ 
(\alpha+\beta -2\gamma) (\frac{1}{3} + \gamma + \gamma^2) + (\gamma-\alpha) (\gamma - \beta) (2+\alpha+ \beta + 2\gamma ) 
\right]\\
{\mathfrak F}_2&=& \int_M y(s-s_0) d\mu \\
&=& \frac{1}{V}\left[ 
(\alpha+ \gamma -2\beta) (\frac{1}{3} + \beta + \beta^2) + (\beta-\alpha) (\beta - \gamma) (2+ \alpha+\gamma + 2\beta ) 
\right]\\
\end{eqnarray*}
where 
$$
V= \alpha \beta+ \alpha \gamma + \beta \gamma + \alpha + \beta + \gamma +  \frac{1}{2}
$$
is the volume of $(M,\Omega)$. 
Three other essential coefficients needed in our computation are 
\begin{eqnarray*}
A&:=& \int_M (x-x_0)^2d\mu
 \\  &=& (288\pi^2 V)^{-1}
   \Big[1 + 6 \beta + 12 \beta^2 + 12 \beta^3 + 6 \beta^4 + 
  6 \gamma^2 (1 + \beta)^4 + 
  6 \alpha^4 (1 + \gamma + \beta)^2 +
    \\&& 6 \gamma (1 + 5 \beta + 8 \beta^2 + 6 \beta^3 + 2 \beta^4) + 
  6 \alpha^2 (2 + 8 \beta + 9 \beta^2 + 4 \beta^3 + \beta^4 + 6 \gamma^2 (1 + \beta)^2 + 
\\&&     2 \gamma (2 + \beta)^2 (1 + 2 \beta)) + 
  12 \alpha^3 (1 + 3 \beta + 2 \beta^2 + 2 \gamma^2 (1 + \beta) + \gamma (3 + 6 \beta + 2 \beta^2)) + 
\\&&   6 \alpha (1 + 5 \beta + 8 \beta^2 + 6 \beta^3 + 2 \beta^4 + 4 \gamma^2 (1 + \beta)^3 + 
     \gamma (5 + 20 \beta + 24 \beta^2 + 12 \beta^3 + 2 \beta^4))\Big]
  \end{eqnarray*}
 \begin{eqnarray*} 
      B&:=&
    \int_M (y-y_0)^2d\mu\\
   &=& (288\pi^2 V)^{-1}
   \Big[1 + 6 \gamma + 12 \gamma^2 + 12 \gamma^3 + 6 \gamma^4 + 
  6 \beta^2 (1 + \gamma)^4 + 
  6 \alpha^4 (1 + \beta + \gamma)^2 +
    \\&& 6 \beta (1 + 5 \gamma + 8 \gamma^2 + 6 \gamma^3 + 2 \gamma^4) + 
  6 \alpha^2 (2 + 8 \gamma + 9 \gamma^2 + 4 \gamma^3 + \gamma^4 + 6 \beta^2 (1 + \gamma)^2 + 
\\&&     2 \beta (2 + \gamma)^2 (1 + 2 \gamma)) + 
  12 \alpha^3 (1 + 3 \gamma + 2 \gamma^2 + 2 \beta^2 (1 + \gamma) + \beta (3 + 6 \gamma + 2 \gamma^2)) + 
\\&&   6 \alpha (1 + 5 \gamma + 8 \gamma^2 + 6 \gamma^3 + 2 \gamma^4 + 4 \beta^2 (1 + \gamma)^3 + 
     \beta (5 + 20 \gamma + 24 \gamma^2 + 12 \gamma^3 + 2 \gamma^4))\Big]
\end{eqnarray*}   
  and 
 \begin{eqnarray*} 
   C &:=&  \int_M (x-x_0)(y-y_0)d\mu\\
 &=& -(576\pi^2 V)^{-1} \Big[1 + 6 \gamma + 6 \gamma^2 + 12 \alpha^4 (1 + \beta + \gamma)^2 +
  6 \beta^2 (1 + 4 \gamma + 3 \gamma^2) + 
\\&&  6 \beta (1 + 5 \gamma + 4 \gamma^2) + 
 24 \alpha^3 (1 + 3 \gamma + 2 \gamma^2 + 2 \beta^2 (1 + \gamma) + \beta (3 + 6 \gamma + 2 \gamma^2)) + 
 \\&& 18 \alpha^2 (1 + 4 \gamma + 3 \gamma^2 + \beta^2 (3 + 6 \gamma + 2 \gamma^2) + 
    2 \beta (2 + 6 \gamma + 3 \gamma^2)) + 
\\&&  6 \alpha (1 + 5 \gamma + 4 \gamma^2 + 2 \beta^2 (2 + 6 \gamma + 3 \gamma^2) + 
    \beta (5 + 20 \gamma + 12 \gamma^2))\Big]
\end{eqnarray*}
If our metric is  extremal, we once again have 
\begin{equation}
\label{scal3}
s-s_0= a(x-x_0) + b (x-x_0)
\end{equation}
for the constants 
\begin{eqnarray*}
a &=& \frac{B{\mathfrak F}_1- C{\mathfrak F}_2}{AB-C^2}\\
b&=&\frac{-C{\mathfrak F}_1+ A{\mathfrak F}_2}{AB-C^2}~.
\end{eqnarray*}
and hence 
$$
\int_M (s-s_0)^2 d\mu = \frac{B{\mathfrak F}_1^2 - 2C{\mathfrak F}_1{\mathfrak F}_+A{\mathfrak F}_2^2 }{AB-C^2}~.
$$
For  any K\"ahler class $\Omega=[\omega ]$ on  $\CP\# 3 \overline{\CP}_2$ with $\delta=1$, 
$\mathcal B (\Omega)$ is the right-hand side over $32\pi^2$, so  automated 
calculation reveals that $\mathcal B (\Omega)$   equals 

\bigskip 

\noindent 
$
 \Big[ \gamma^2 (1 + 4 \gamma + 6 \gamma^2 + 4 \gamma^3) + 
     \beta \gamma (-1 + 3 \gamma + 18 \gamma^2 + 26 \gamma^3 + 16 \gamma^4) + 
     2 \beta^5 (2 + 8 \gamma + 21 \gamma^2 + 33 \gamma^3 + 27 \gamma^4 + 9 \gamma^5) + 
     \beta^2 (1 + 3 \gamma + 27 \gamma^2 + 79 \gamma^3 + 89 \gamma^4 + 42 \gamma^5) + 
     \beta^4 (6 + 26 \gamma + 89 \gamma^2 + 168 \gamma^3 + 150 \gamma^4 + 54 \gamma^5) + 
        \beta^3 (4 + 18 \gamma + 79 \gamma^2 + 173 \gamma^3 + 168 \gamma^4 + 66 \gamma^5) + 
        2 \alpha^5 (2 + 9 \beta^5 + 8 \gamma + 21 \gamma^2 + 33 \gamma^3 + 27 \gamma^4 + 9 \gamma^5 + 
        9 \beta^4 (3 + \gamma) + 3 \beta^2 (7 + 5 \gamma) + 
        3 \beta^3 (11 + 6 \gamma) + 
        \beta (8 + 12 \gamma + 15 \gamma^2 + 18 \gamma^3 + 9 \gamma^4)) + 
        \alpha^4 \Big( 168 \gamma^3 + 150 \gamma^4 + 54 \gamma^5 + 
        18 \beta^5 (3 + \gamma) + 
         \beta^4 (150 + 72 \gamma - 18 \gamma^2) +  
        6 \beta^3 (28 + 12 \gamma - 15 \gamma^2 - 6 \gamma^3) +
         6 + 26 \gamma + 89 \gamma^2 +
         \beta^2 (89 - 6 \gamma - 162 \gamma^2 - 90 \gamma^3 - 18 \gamma^4) + 
        2 \beta (13 + 2 \gamma - 3 \gamma^2 + 36 \gamma^3 + 36 \gamma^4 + 9 \gamma^5)\Big) + 
        \alpha^2 \Big(1 + 3 \gamma + 27 \gamma^2 + 79 \gamma^3 + 89 \gamma^4 + 42 \gamma^5 + 
        6 \beta^5 (7 + 5 \gamma) - 
         \beta^4 (-89 + 6 \gamma + 162 \gamma^2 + 90 \gamma^3 + 18 \gamma^4) + 
          3 \beta^2 (9 - 56 \gamma - 165 \gamma^2 - 144 \gamma^3 - 54 \gamma^4) - 
        \beta^3 (-79 + 111 \gamma + 432 \gamma^2 + 324 \gamma^3 + 90 \gamma^4) + 
        3 \beta (1 - 23 \gamma - 56 \gamma^2 - 37 \gamma^3 - 2 \gamma^4 + 10 \gamma^5)\Big) + 
        \alpha^3 \Big(4 + 18 \gamma + 79 \gamma^2 + 173 \gamma^3 + 168 \gamma^4 + 66 \gamma^5 + 
        6 \beta^5 (11 + 6 \gamma) +
          6 \beta^4 (28  12 \gamma - 15 \gamma^2 - 6 \gamma^3) +        
        \beta^3 (173 - 324 \gamma^2 - 216 \gamma^3 - 36 \gamma^4) - 
        \beta^2 (-79 + 111 \gamma + 432 \gamma^2 + 324 \gamma^3 + 90 \gamma^4) + 
        \beta (18 - 46 \gamma - 111 \gamma^2 + 72 \gamma^4 + 36 \gamma^5)\Big) + 
        \alpha \Big(2 \beta^5 (8 + 12 \gamma + 15 \gamma^2 + 18 \gamma^3 + 9 \gamma^4) + 
        \gamma (-1 + 3 \gamma + 18 \gamma^2 + 26 \gamma^3 + 16 \gamma^4) + 
          2 \beta^4 (13 + 2 \gamma - 3 \gamma^2 + 36 \gamma^3 + 36 \gamma^4 + 9 \gamma^5) + 
        3 \beta^2 (1 - 23 \gamma - 56 \gamma^2 - 37 \gamma^3 - 2 \gamma^4 + 10 \gamma^5) + 
          \beta (-1 - 30 \gamma - 69 \gamma^2 - 46 \gamma^3 + 4 \gamma^4 + 24 \gamma^5) + 
        \beta^3 (18 - 46 \gamma - 111 \gamma^2 + 72 \gamma^4 + 36 \gamma^5)\Big)\Big]
        \Big/
 \\          \Big[(1 + 2 \gamma + 
     2 \beta (1 + \gamma) + 2 \alpha (1 + \beta + \gamma)) (1 + 10 \gamma + 36 \gamma^2 + 64 \gamma^3 + 
       60 \gamma^4 + 24 \gamma^5 + 
         24 \beta^5 (1 + \gamma)^5 + 24 \alpha^5 (1 + \beta + \gamma)^5 + 
     12 \beta^4 (1 + \gamma)^2 (5 + 20 \gamma + 23 \gamma^2 + 10 \gamma^3) + 
        16 \beta^3 (4 + 28 \gamma + 72 \gamma^2 + 90 \gamma^3 + 57 \gamma^4 + 15 \gamma^5) + 
     12 \beta^2 (3 + 24 \gamma + 69 \gamma^2 + 96 \gamma^3 + 68 \gamma^4 + 20 \gamma^5) + 
        2 \beta (5 + 45 \gamma + 144 \gamma^2 + 224 \gamma^3 + 180 \gamma^4 + 60 \gamma^5) +    12 \alpha^4 (1 + \beta + \gamma)^2 (5 + 20 \gamma + 23 \gamma^2 + 10 \gamma^3 + 
        10 \beta^3 (1 + \gamma) + \beta^2 (23 + 46 \gamma + 16 \gamma^2) + 
         2 \beta (10 + 30 \gamma + 23 \gamma^2 + 5 \gamma^3)) + 
       16 \alpha^3 (4 + 28 \gamma + 72 \gamma^2 + 90 \gamma^3 + 57 \gamma^4 + 15 \gamma^5 + 
        15 \beta^5 (1 + \gamma)^2 + 
        3 \beta^4 (19 + 57 \gamma + 50 \gamma^2 + 13 \gamma^3) + 
          3 \beta^3 (30 + 120 \gamma + 155 \gamma^2 + 78 \gamma^3 + 13 \gamma^4) + 
          3 \beta^2 (24 + 120 \gamma + 206 \gamma^2 + 155 \gamma^3 + 50 \gamma^4 + 5 \gamma^5) + 
        \beta (28 + 168 \gamma + 360 \gamma^2 + 360 \gamma^3 + 171 \gamma^4 + 30 \gamma^5)) + 
        12 \alpha^2 (3 + 24 \gamma + 69 \gamma^2 + 96 \gamma^3 + 68 \gamma^4 + 20 \gamma^5 + 
        20 \beta^5 (1 + \gamma)^3 + 
          \beta^4 (68 + 272 \gamma + 366 \gamma^2 + 200 \gamma^3 + 36 \gamma^4) + 
        4 \beta^3 (24 + 120 \gamma + 206 \gamma^2 + 155 \gamma^3 + 50 \gamma^4 + 5 \gamma^5) + 
          2 \beta (12 + 84 \gamma + 207 \gamma^2 + 240 \gamma^3 + 136 \gamma^4 + 30 \gamma^5) + 
        \beta^2 (69 + 414 \gamma + 864 \gamma^2 + 824 \gamma^3 + 366 \gamma^4 + 60 \gamma^5)) + 
        2 \alpha (  
        60 \beta^5 (1 + \gamma)^4 + 
        12 \beta^4 (15 + 75 \gamma + 136 \gamma^2 + 114 \gamma^3 + 43 \gamma^4 + 5 \gamma^5) + 
          12 \beta^2 (12 + 84 \gamma + 207 \gamma^2 + 240 \gamma^3 + 136 \gamma^4 + 30 \gamma^5) + 
          8 \beta^3 (28 + 168 \gamma + 360 \gamma^2 + 360 \gamma^3 + 171 \gamma^4 + 30 \gamma^5) + 
     5 + 45 \gamma + 144 \gamma^2 +  224 \gamma^3 + 180 \gamma^4 + 60 \gamma^5 + 3 \beta (15 + 120 \gamma + 336 \gamma^2 + 448 \gamma^3 + 300 \gamma^4 + 80 \gamma^5))\Big]
$

\begin{lem} \label{up2} 
One has ${\mathcal  B} < \frac{1}{4}$ throughout the Kahler cone of $\CP\# 3 \overline{\CP}_2$. 
\end{lem}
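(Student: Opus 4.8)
The plan is to imitate the proof of Lemma~\ref{up1}, now working in the three variables $\alpha,\beta,\gamma$ that parametrize the slice $\delta=1$ of the Kähler cone. Write $\mathcal B=N/D$ for the rational expression displayed just above, where $N$ and $D$ are its numerator and denominator. I would first record that $D>0$ on the open orthant $\{\alpha,\beta,\gamma>0\}$: the displayed denominator carries the manifest factor $1+2\gamma+2\beta(1+\gamma)+2\alpha(1+\beta+\gamma)=1+2\alpha+2\beta+2\gamma+2\alpha\beta+2\alpha\gamma+2\beta\gamma=2V$, and, up to a positive numerical constant, the remaining bracket is a product of positive powers of $V$ with the quantity $AB-C^2$. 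Both are positive on the orthant: $V>0$ manifestly, while $AB-C^2>0$ by the Cauchy--Schwarz inequality for the measure $d\mu$, strictly so because $x$ and $y$ are affinely independent on a genuinely two-dimensional polygon. Granting $D>0$, the assertion $\mathcal B<\tfrac14$ on this slice is \emph{equivalent} to the strict positivity of the single polynomial $P:=D-4N$ on $\{\alpha,\beta,\gamma>0\}$.

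The core of the argument is thus to certify $P>0$ for $\alpha,\beta,\gamma>0$. I would compute $P$ in fully expanded form with \emph{Mathematica}, exactly as the quantity ``denominator minus four times numerator'' was computed in Lemma~\ref{up1}. The \textbf{main obstacle} is that, in contrast to the two-variable case, $N$ genuinely contains monomials with negative coefficients --- for instance the $-1$ in $\beta\gamma(-1+3\gamma+\cdots)$ and the various negative entries in the $\alpha^2$, $\alpha^3$ and $\alpha^4$ blocks --- so $P$ cannot be shown positive by a naive term-by-term comparison. To circumvent this I would reuse the device of Lemma~\ref{up1}, where the offending terms were absorbed into the manifestly positive quantity $4(\beta^2-\beta^4+\beta^6+\gamma^2-\gamma^4+\gamma^6)$ built from the elementary inequality $1-t^2+t^4>0$. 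Concretely, I would exhibit an explicit polynomial $Q$, nonnegative on the orthant and assembled from a symmetric sum of pieces of the shape $c\,(\sigma^2-\sigma^4+\sigma^6)$ in the variables $\sigma\in\{\alpha,\beta,\gamma\}$ times manifestly positive factors (possibly together with a few further nonnegative combinations), chosen so that every monomial of $P-Q$ has a nonnegative coefficient. Since at least one such coefficient is strictly positive, $P-Q>0$ on the open orthant, and together with $Q\geq 0$ this yields $P>0$, hence $\mathcal B<\tfrac14$ on the slice $\delta=1$. This is again a finite, if lengthy, algebraic verification; locating the correct correction $Q$ and confirming the residual nonnegativity of all coefficients is precisely the step I expect to absorb the bulk of the labour and to demand the computer-algebra assistance already used throughout the appendices.

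It remains to pass from the slice $\delta=1$ to all of $\zap K$. Since $\mathcal B$ is homogeneous of degree $0$, the bound on $\delta=1$ immediately gives $\mathcal B<\tfrac14$ on the entire open region $\delta>0$. The region $\delta<0$ is dispatched by the Cremona transformation discussed above: it is a lattice automorphism preserving $\mathcal B$ and carrying $\delta<0$ to $\delta>0$, so the bound transports verbatim to the mirror half-cone. Finally, the interior hyperplane $\delta=0$ requires a little extra care. By continuity of $\mathcal B$ across this hyperplane together with the strict bound on the dense complement $\{\delta\neq 0\}$, one gets $\mathcal B\leq\tfrac14$ there; the strict inequality is then secured by applying the same positivity certificate to the homogeneous form of $P$ in all four variables $(\alpha,\beta,\gamma,\delta)$, of which the $\delta=1$ computation is the convenient restriction, or, alternatively, by excluding equality through a short direct check on this two-parameter family. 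Combining the three cases gives $\mathcal B<\tfrac14$ throughout $\zap K$, as claimed.
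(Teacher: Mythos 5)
Your proposal matches the paper's proof in all essentials: the paper likewise forms $P=D-4N$ from the displayed rational expression for $\mathcal B$, observes that $P$ is term-by-term larger than exactly the correction polynomial you predicted, namely $4(\alpha^2-\alpha^4+\alpha^6)+4(\beta^2-\beta^4+\beta^6)+4(\gamma^2-\gamma^4+\gamma^6)>0$, and invokes the Cremona symmetry to cover $\delta<0$. The one point of divergence is the hyperplane $\delta=0$: rather than your continuity-plus-homogenized-certificate argument (which would work, but requires a further computation to rule out equality), the paper simply notes that the Futaki invariant vanishes identically on this Cremona-fixed locus by \cite{ls}, so that $\mathcal B=0<\frac14$ there at no extra cost.
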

\begin{proof}
Subtracting four times the numerator from the denominator yields

\bigskip

\noindent 
$
1 + 12 \gamma + 24 \gamma^2 + 8 \gamma^3 - 4 \gamma^4 + 16 \gamma^5 + 48 \gamma^6 + 
  48 \beta^6 (1 + \gamma)^6 + 48\alpha^6 (1 + \beta + \gamma)^6 + 
  16 \beta^5 (1 + 31 \gamma + 93 \gamma^2 + 129 \gamma^3 + 108 \gamma^4 + 60 \gamma^5 + 18 \gamma^6) + 
  4 \beta (3 + 41 \gamma + 116 \gamma^2 + 162 \gamma^3 + 168 \gamma^4 + 124 \gamma^5 + 72 \gamma^6) + 
  8 \beta^2 (3 + 58 \gamma + 162 \gamma^2 + 219 \gamma^3 + 236 \gamma^4 + 186 \gamma^5 + 90 \gamma^6) + 
  8 \beta^3 (1 + 81 \gamma + 219 \gamma^2 + 271 \gamma^3 + 294 \gamma^4 + 258 \gamma^5 + 120 \gamma^6) + 
  4 \beta^4 (-1 + 168 \gamma + 472 \gamma^2 + 588 \gamma^3 + 561 \gamma^4 + 432 \gamma^5 + 
    180 \gamma^6) + 
  16\alpha^5 (1 + 31 \gamma + 93 \gamma^2 + 129 \gamma^3 + 108 \gamma^4 + 60 \gamma^5 + 18 \gamma^6 + 
     18 \beta^6 (1 + \gamma) + 12 \beta^5 (5 + 16 \gamma + 7 \gamma^2) + 
    6 \beta^4 (18 + 102 \gamma + 98 \gamma^2 + 27 \gamma^3) + 
     3 \beta^3 (43 + 324 \gamma + 482 \gamma^2 + 276 \gamma^3 + 54 \gamma^4) + 
    3 \beta^2 (31 + 275 \gamma + 550 \gamma^2 + 482 \gamma^3 + 196 \gamma^4 + 28 \gamma^5) + 
     \beta (31 + 330 \gamma + 825 \gamma^2 + 972 \gamma^3 + 612 \gamma^4 + 192 \gamma^5 + 
       18 \gamma^6)) + 
  4\alpha^4 (-1 + 168 \gamma + 472 \gamma^2 + 588 \gamma^3 + 561 \gamma^4 + 432 \gamma^5 + 
    180 \gamma^6 + 180 \beta^6 (1 + \gamma)^2 + 
     24 \beta^5 (18 + 102 \gamma + 98 \gamma^2 + 27 \gamma^3) + 
     \beta^4 (561 + 6468 \gamma + 9624 \gamma^2 + 5112 \gamma^3 + 936 \gamma^4) + 
     12 \beta^3 (49 + 757 \gamma + 1505 \gamma^2 + 1196 \gamma^3 + 426 \gamma^4 + 54 \gamma^5) + 
     4 \beta (42 + 650 \gamma + 1788 \gamma^2 + 2271 \gamma^3 + 1617 \gamma^4 + 612 \gamma^5 + 
       90 \gamma^6) + 
     2 \beta^2 (236 + 3576 \gamma + 8631 \gamma^2 + 9030 \gamma^3 + 4812 \gamma^4 + 1176 \gamma^5 +      90 \gamma^6)) + 
  4\alpha (3 + 41 \gamma + 116 \gamma^2 + 162 \gamma^3 + 168 \gamma^4 + 124 \gamma^5 + 72 \gamma^6 + 
    72 \beta^6 (1 + \gamma)^5 + 
     4 \beta^5 (31 + 330 \gamma + 825 \gamma^2 + 972 \gamma^3 + 612 \gamma^4 + 192 \gamma^5 + 
       18 \gamma^6) + 
     4 \beta^4 (42 + 650 \gamma + 1788 \gamma^2 + 2271 \gamma^3 + 1617 \gamma^4 + 612 \gamma^5 + 
       90 \gamma^6) + 
     \beta (41 + 570 \gamma + 1812 \gamma^2 + 2816 \gamma^3 + 2600 \gamma^4 + 1320 \gamma^5 + 
       360 \gamma^6) + 
     2 \beta^2 (58 + 906 \gamma + 2832 \gamma^2 + 4189 \gamma^3 + 3576 \gamma^4 + 1650 \gamma^5 + 
       360 \gamma^6) + 
     2 \beta^3 (81 + 1408 \gamma + 4189 \gamma^2 + 5778 \gamma^3 + 4542 \gamma^4 + 1944 \gamma^5 +   360 \gamma^6)) + 
  8\alpha^3 (1 + 81 \gamma + 219 \gamma^2 + 271 \gamma^3 + 294 \gamma^4 + 258 \gamma^5 + 120 \gamma^6 + 
    120 \beta^6 (1 + \gamma)^3 + 
     6 \beta^5 (43 + 324 \gamma + 482 \gamma^2 + 276 \gamma^3 + 54 \gamma^4) + 
    6 \beta^4 (49 + 757 \gamma + 1505 \gamma^2 + 1196 \gamma^3 + 426 \gamma^4 + 54 \gamma^5) + 
     \beta^3 (271 + 5778 \gamma + 14082 \gamma^2 + 14328 \gamma^3 + 7176 \gamma^4 + 1656 \gamma^5 + 
       120 \gamma^6) + 
     \beta (81 + 1408 \gamma + 4189 \gamma^2 + 5778 \gamma^3 + 4542 \gamma^4 + 1944 \gamma^5 + 
       360 \gamma^6) + 
     \beta^2 (219 + 4189 \gamma + 11592 \gamma^2 + 14082 \gamma^3 + 9030 \gamma^4 + 2892 \gamma^5 + 
       360 \gamma^6)) + 
  8\alpha^2 (3 + 58 \gamma + 162 \gamma^2 + 219 \gamma^3 + 236 \gamma^4 + 186 \gamma^5 + 90 \gamma^6 + 
    90 \beta^6 (1 + \gamma)^4 + 
     6 \beta^5 (31 + 275 \gamma + 550 \gamma^2 + 482 \gamma^3 + 196 \gamma^4 + 28 \gamma^5) + 
     \beta^4 (236 + 3576 \gamma + 8631 \gamma^2 + 9030 \gamma^3 + 4812 \gamma^4 + 1176 \gamma^5 + 
       90 \gamma^6) + 
     3 \beta^2 (54 + 944 \gamma + 2838 \gamma^2 + 3864 \gamma^3 + 2877 \gamma^4 + 1100 \gamma^5 + 
       180 \gamma^6) + 
     \beta (58 + 906 \gamma + 2832 \gamma^2 + 4189 \gamma^3 + 3576 \gamma^4 + 1650 \gamma^5 + 
       360 \gamma^6) + 
     \beta^3 (219 + 4189 \gamma + 11592 \gamma^2 + 14082 \gamma^3 + 9030 \gamma^4 + 2892 \gamma^5 + 
       360 \gamma^6))
$

\bigskip

\noindent 
Since this is term-by-term  larger than 
$$4(\alpha^2 -  \alpha^4 +  \alpha^6)+
4(\beta^2 -  \beta^4 +  \beta^6)+
4(\gamma^2 -  \gamma^4 +  \gamma^6) > 0,$$
the denominator is more than  four times larger
than the numerator,  and ${\mathcal B}< \frac{1}{4}$ on the complement
of the hyperplane $\delta=0$ in the K\"ahler cone ${\zap K}$. Since the Futaki invariant
vanishes on this hyperplane \cite{ls}, ${\mathcal B}=0$ there, and we 
therefore have the strict inequality  ${\mathcal B}< \frac{1}{4}$ on all of ${\zap K}$.
       \end{proof}

\begin{lem}\label{pos3}
If $g$ an extremal K\"ahler metric on $M=\CP_2 \# 3 \overline{\CP}_2$, then the scalar curvature
$s$ of $g$ is positive everywhere on $M$.  Moreover, there is a 
continuous function $f : {\zap K}\to \RR$   such that 
$s_{\max} = f (\Omega )$ for any extremal K\"ahler metric, and this $f$ remains bounded 
as one approaches the pull-back of any class from $\CP_2\# 2 \overline{\CP}_2$. 
\end{lem}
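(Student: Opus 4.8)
The plan is to follow the template of Lemma \ref{pos2}, exploiting the fact that for an extremal Kähler metric the scalar curvature is an affine function of the moment-map coordinates. First I would recall from equation (\ref{scal3}) that $s = s_0 + a(x-x_0) + b(y-y_0)$, where $a$ and $b$ depend only on the Kähler class $\Omega$ and are obtained from the formula $a=(B{\mathfrak F}_1-C{\mathfrak F}_2)/(AB-C^2)$ together with the displayed expressions for $A$, $B$, $C$, ${\mathfrak F}_1$, ${\mathfrak F}_2$, and $V$. Expanding these, one checks by the same term-by-term positivity trick used in Lemmas \ref{up1}, \ref{up2}, and \ref{pos2} that both $a<0$ and $b<0$ throughout the region $\delta=1$. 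Since $s$ is affine in $(x,y)$ and the moment image is the convex hexagon, $s$ then attains its extrema at vertices; and since the hexagon is the moment triangle of $\CP_2$ with all three corners cut, the condition $a,b<0$ forces the maximum onto one of the two vertices bounding the short edge $\alpha$ that truncates the origin corner, while the minimum is attained at one of the two vertices on the cut edges truncating the two far corners. In each case the two candidate vertices are interchanged by the symmetry $\beta\leftrightarrow\gamma$, so a single substitution settles each pair.

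For positivity I would substitute $s_0=4\pi(c_1\cdot\Omega)/V$ into the value of $s$ at a far vertex, exactly as in Lemma \ref{pos2}. This produces a ratio of two polynomials in $\alpha,\beta,\gamma$, and the claim $s_{\min}>0$ reduces to exhibiting numerator and denominator as simultaneously positive sums of monomials on the positive octant; I expect this to be verifiable, with Mathematica's assistance, and the $\beta\leftrightarrow\gamma$ symmetry then covers the second far vertex. Because every geometric conclusion is invariant under the Cremona transformation $\delta\mapsto-\delta$, positivity on $\delta=1$, together with the fact that the Futaki invariant vanishes on $\delta=0$ (so that there $s\equiv s_0>0$), yields $s>0$ on all of ${\zap K}$.

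For the maximum, evaluating $s$ at the two near-vertices on the $\alpha$-edge and substituting $s_0$ gives two smooth functions of $\Omega$, and $s_{\max}$ is the larger of the two, the choice being governed by the sign of $a-b$. Hence $f(\Omega)=s_{\max}$ is a maximum of two smooth functions, so it is continuous, and imposing homogeneity of degree $-1$ extends it to all of ${\zap K}$. Finally, boundedness as $\Omega$ approaches the pullback of a class from $\CP_2\#2\overline{\CP}_2$ follows by identifying that limit with the degeneration $\alpha\to0$: the $\alpha$-edge collapses, the origin corner of the hexagon is restored, and the hexagon degenerates precisely to the pentagon of Appendix \ref{comp2} with the same $\beta,\gamma,\delta=1$. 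In this limit the two near-vertices both converge to $(0,0)$, so the two candidate expressions for $f$ converge to the single smooth origin-vertex function produced in Lemma \ref{pos2}, which is finite; since $V\to\beta\gamma+\beta+\gamma+\tfrac12>0$ stays bounded away from $0$, the function $f$ extends continuously, and in particular boundedly, across that boundary.

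The hardest part will be the positivity verification. Unlike the two-variable case of Lemma \ref{pos2}, the substituted numerator and denominator are high-degree polynomials in \emph{three} variables whose simultaneous positivity is not visible at a glance, so the genuinely delicate step is to pin down the correct far vertex and then organize these polynomials into transparently nonnegative sums of monomials.
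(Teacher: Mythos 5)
There is a genuine gap at the pivotal step. You assert that the coefficients $a$ and $b$ of equation (\ref{scal3}) are both negative throughout the slice $\delta=1$, and you use this to pin the maximum of $s$ onto the two vertices adjacent to the short edge $\alpha$ and the minimum onto two ``far'' vertices. That assertion is false. When $\alpha=\beta=\gamma$ the class is invariant under the $S_3$ permuting the exceptional divisors; this group acts on the Lie algebra of the $2$-torus by its standard irreducible representation, which admits no nonzero invariant covector, so the Futaki character vanishes and $a=b=0$ --- as one also reads off from the displayed formulas, since $(\alpha+\beta-2\gamma)$ and $(\gamma-\alpha)(\gamma-\beta)$ both vanish there. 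Worse, the signs actually reverse: on the wall $\beta=\gamma$ one has ${\mathfrak F}_1={\mathfrak F}_2=\frac{1}{V}(\alpha-\gamma)(\tfrac{1}{3}+\gamma+\gamma^2)$ and $A=B$, whence $a=b={\mathfrak F}_1/(A+C)$ with $A+C>0$ by Cauchy--Schwarz; so $a$ and $b$ are \emph{positive} whenever $\alpha>\beta=\gamma$. Consequently the maximizing and minimizing vertices migrate around the hexagon as $\Omega$ varies, your single far-vertex positivity check does not bound $s_{\min}$ from below, and your two near-vertex candidates do not control $s_{\max}$ --- so the limiting argument for boundedness as $\alpha\to 0$ is incomplete as well.

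The repair is to give up on locating the extremal vertices and evaluate $s$ at all six, which is what the paper does: the permutation group of $\alpha,\beta,\gamma$ acts transitively on the six vertices of the hexagon, so a single computation at the vertex $(\alpha/2\pi,0)$, followed by permutation of the variables, yields six rational functions of $(\alpha,\beta,\gamma)$, each exhibited as term-by-term positive and finite for $\alpha,\beta,\gamma\geq 0$. Since $s$ is affine on the moment polygon, $s_{\min}$ is the minimum of these six positive functions and $s_{\max}$ their maximum; the latter is continuous (only continuous, not smooth --- which is precisely why Lemma \ref{pos3} claims less regularity for $f$ than Lemma \ref{pos2} does), it homogenizes in $(\alpha,\beta,\gamma,\delta)$ to a function on all of ${\zap K}$ regular across $\delta=0$, and it stays bounded as $\alpha\to 0$ because none of the six denominators vanishes there. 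Your general framework --- affine $s$ on a convex polytope, the substitution $s_0=4\pi(c_1\cdot\Omega)/V$, and the Cremona/$\delta=0$ bookkeeping --- is sound; it is only the sign analysis of $a$ and $b$ that must be discarded.
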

\begin{proof}
The  group of permutations of $\alpha$, $\beta$, and $\gamma$ acts
transitively on the  vertices of our hexagon, so it essentially suffices to
compute the value of $s$ at a given vertex, since the maximum and minimum
must occur at some critical point. In fact, evaluating $s$ at the vertex
$(x,y)=(\alpha/2\pi, 0)$ gives 

\bigskip 

\noindent 
$
24\pi \Big[1 + 10 \gamma + 32 \gamma^2 + 48 \gamma^3 + 36 \gamma^4 + 8 \gamma^5 + 8 \beta^5 (1 + \gamma)^4 + 
     8 \alpha^5 (1 + \beta + \gamma)^4 + 
     4 \beta^4 (9 + 44 \gamma + 80 \gamma^2 + 68 \gamma^3 + 25 \gamma^4 + 2 \gamma^5) + 
     8 \beta^3 (6 + 37 \gamma + 80 \gamma^2 + 78 \gamma^3 + 34 \gamma^4 + 4 \gamma^5) + 
     4 \beta^2 (8 + 60 \gamma + 147 \gamma^2 + 160 \gamma^3 + 80 \gamma^4 + 12 \gamma^5) + 
     2 \beta (5 + 44 \gamma + 120 \gamma^2 + 148 \gamma^3 + 88 \gamma^4 + 16 \gamma^5) + 
     4 \alpha^4 (5 + 2 \beta^5 + 24 \gamma + 40 \gamma^2 + 32 \gamma^3 + 13 \gamma^4 + 2 \gamma^5 + 
        \beta^4 (19 + 18 \gamma) + \beta^3 (50 + 96 \gamma + 40 \gamma^2) + 
        2 \beta^2 (29 + 84 \gamma + 72 \gamma^2 + 20 \gamma^3) + 
        2 \beta (15 + 58 \gamma + 75 \gamma^2 + 42 \gamma^3 + 9 \gamma^4)) + 
     8 \alpha^3 (3 + 17 \gamma + 34 \gamma^2 + 35 \gamma^3 + 19 \gamma^4 + 4 \gamma^5 + 
        4 \beta^5 (1 + \gamma) + \beta^4 (25 + 48 \gamma + 20 \gamma^2) + 
        \beta^3 (52 + 151 \gamma + 125 \gamma^2 + 30 \gamma^3) + 
        \beta^2 (52 + 201 \gamma + 246 \gamma^2 + 122 \gamma^3 + 20 \gamma^4) + 
        \beta (23 + 110 \gamma + 177 \gamma^2 + 133 \gamma^3 + 45 \gamma^4 + 4 \gamma^5)) + 
     4 \alpha^2 (4 + 28 \gamma + 69 \gamma^2 + 84 \gamma^3 + 52 \gamma^4 + 12 \gamma^5 + 
        12 \beta^5 (1 + \gamma)^2 + 2 \beta^4 (31 + 90 \gamma + 78 \gamma^2 + 20 \gamma^3) + 
        2 \beta^3 (53 + 210 \gamma + 267 \gamma^2 + 128 \gamma^3 + 20 \gamma^4) + 
        6 \beta^2 (15 + 75 \gamma + 123 \gamma^2 + 86 \gamma^3 + 25 \gamma^4 + 2 \gamma^5) + 
        \beta (35 + 210 \gamma + 420 \gamma^2 + 388 \gamma^3 + 168 \gamma^4 + 24 \gamma^5)) + 
     2 \alpha (3 + 26 \gamma + 74 \gamma^2 + 100 \gamma^3 + 68 \gamma^4 + 16 \gamma^5 + 
        16 \beta^5 (1 + \gamma)^3 + 
        4 \beta^4 (19 + 74 \gamma + 99 \gamma^2 + 54 \gamma^3 + 9 \gamma^4) + 
        4 \beta^3 (28 + 142 \gamma + 243 \gamma^2 + 175 \gamma^3 + 51 \gamma^4 + 4 \gamma^5) + 
        2 \beta^2 (41 + 258 \gamma + 528 \gamma^2 + 470 \gamma^3 + 186 \gamma^4 + 24 \gamma^5) + 
        \beta (28 + 210 \gamma + 498 \gamma^2 + 536 \gamma^3 + 276 \gamma^4 + 
           48 \gamma^5))\Big]\Big/
           \\ 
           \Big[1 + 10 \gamma + 36 \gamma^2 + 64 \gamma^3 + 60 \gamma^4 + 
   24 \gamma^5 + 24 \beta^5 (1 + \gamma)^5 + 24 \alpha^5 (1 + \beta + \gamma)^5 + 
   12 \beta^4 (1 + \gamma)^2 (5 + 20 \gamma + 23 \gamma^2 + 10 \gamma^3) + 
   16 \beta^3 (4 + 28 \gamma + 72 \gamma^2 + 90 \gamma^3 + 57 \gamma^4 + 15 \gamma^5) + 
   12 \beta^2 (3 + 24 \gamma + 69 \gamma^2 + 96 \gamma^3 + 68 \gamma^4 + 20 \gamma^5) + 
   2 \beta (5 + 45 \gamma + 144 \gamma^2 + 224 \gamma^3 + 180 \gamma^4 + 60 \gamma^5) + 
   12 \alpha^4 (1 + \beta + \gamma)^2 (5 + 20 \gamma + 23 \gamma^2 + 10 \gamma^3 + 10 \beta^3 (1 + \gamma) +
       \beta^2 (23 + 46 \gamma + 16 \gamma^2) + 2 \beta (10 + 30 \gamma + 23 \gamma^2 + 5 \gamma^3)) + 
   16 \alpha^3 (4 + 28 \gamma + 72 \gamma^2 + 90 \gamma^3 + 57 \gamma^4 + 15 \gamma^5 + 
      15 \beta^5 (1 + \gamma)^2 + 3 \beta^4 (19 + 57 \gamma + 50 \gamma^2 + 13 \gamma^3) + 
      3 \beta^3 (30 + 120 \gamma + 155 \gamma^2 + 78 \gamma^3 + 13 \gamma^4) + 
      3 \beta^2 (24 + 120 \gamma + 206 \gamma^2 + 155 \gamma^3 + 50 \gamma^4 + 5 \gamma^5) + 
      \beta (28 + 168 \gamma + 360 \gamma^2 + 360 \gamma^3 + 171 \gamma^4 + 30 \gamma^5)) + 
   12 \alpha^2 (3 + 24 \gamma + 69 \gamma^2 + 96 \gamma^3 + 68 \gamma^4 + 20 \gamma^5 + 
      20 \beta^5 (1 + \gamma)^3 + 
      \beta^4 (68 + 272 \gamma + 366 \gamma^2 + 200 \gamma^3 + 36 \gamma^4) + 
      4 \beta^3 (24 + 120 \gamma + 206 \gamma^2 + 155 \gamma^3 + 50 \gamma^4 + 5 \gamma^5) + 
      2 \beta (12 + 84 \gamma + 207 \gamma^2 + 240 \gamma^3 + 136 \gamma^4 + 30 \gamma^5) + 
      \beta^2 (69 + 414 \gamma + 864 \gamma^2 + 824 \gamma^3 + 366 \gamma^4 + 60 \gamma^5)) + 
   2 \alpha (5 + 45 \gamma + 144 \gamma^2 + 224 \gamma^3 + 180 \gamma^4 + 60 \gamma^5 + 
      60 \beta^5 (1 + \gamma)^4 + 
      12 \beta^4 (15 + 75 \gamma + 136 \gamma^2 + 114 \gamma^3 + 43 \gamma^4 + 5 \gamma^5) + 
      12 \beta^2 (12 + 84 \gamma + 207 \gamma^2 + 240 \gamma^3 + 136 \gamma^4 + 30 \gamma^5) + 
      8 \beta^3 (28 + 168 \gamma + 360 \gamma^2 + 360 \gamma^3 + 171 \gamma^4 + 30 \gamma^5) + 
      3 \beta (15 + 120 \gamma + 336 \gamma^2 + 448 \gamma^3 + 300 \gamma^4 + 80 \gamma^5))\Big]$
      
      \bigskip
      
      \noindent
which is  smooth and term-by-term positive  for $\alpha, \beta, \gamma \geq 0$.

This expression can be uniquely extended to all $\delta > 0$ by turning
the numerator  and denominator into  homogeneous polynomials of 
$(\alpha, \beta , \gamma, \delta)$
 of degree $9$ and $10$, respectively. 
 The resulting expression is then smooth across $\delta =0$,
 because the  numerator and denominator of the above expression actually do  contain 
 some terms of degree $9$ and $10$, respectively. 
  Permuting $\alpha$, $\beta$, and  $\gamma$,
we obtain six smooth positive functions. Taking the minimum of these then 
shows that $s_{\min}$ is everywhere positive, while taking the maximum
produces the required continuous positive function $f: {\zap K}\to \RR$.
\end{proof}

\pagebreak

\vfill 

\noindent 
{\bf Acknowledgements.} The author would like to warmly   thank
 Xiuxiong Chen and Yuan Fang for their critical reading of an early draft of 
 the manuscript, which resulted in  substantial corrections, and eventually led to   major
 simplifications  and improvements.
 
 \bigskip 

\noindent
{\sc Author's address:} 

\medskip 

 \noindent
{Mathematics Department, SUNY, Stony Brook, NY 11794, USA
}

\bigskip

\noindent
{\sc Author's e-mail:} 

\medskip 

 \noindent
{\tt claude@math.sunysb.edu
}

  \end{document}